\numberwithin{equation}{section}
\numberwithin{figure}{section}
\theoremstyle{plain}
  \newtheorem{theorem}{Theorem}
\theoremstyle{plain}
  \newaliascnt{proposition}{theorem}
  \newtheorem{proposition}[proposition]{Proposition}
\theoremstyle{plain}
  \newaliascnt{lemma}{theorem}
  \newtheorem{lemma}[lemma]{Lemma}
\theoremstyle{plain}
  \newaliascnt{corollary}{theorem}
\theoremstyle{definition}
  \newtheorem{definition}{Definition}
\theoremstyle{definition}
  \newtheorem{remark}{Remark}
\theoremstyle{definition}
  \newtheorem{notation}{Notation}
\theoremstyle{definition}
  \newtheorem{problem}{Problem}
\theoremstyle{definition}
  \newtheorem{conjecture}{Conjecture}
\date{}
\begin{document}

\title[Commutation relations for cominuscole parabolics]{Commutation relations for quantum root vectors of cominuscole parabolics}

\author{Marco Matassa}%

\email{marco.matassa@gmail.com, mmatassa@math.uio.no}

\address{Department of Mathematics, University of Oslo, P.B. 1053 Blindern, 0316 Oslo, Norway.}

\begin{abstract}
We prove a result for the commutator of quantum root vectors corresponding to cominuscole parabolics.
Specifically we show that, given two quantum root vectors, belonging respectively to the quantized nilradical and the quantized opposite nilradical, their commutator belongs to the quantized Levi factor. This generalizes the classical result for Lie algebras.
Recall that the quantum root vectors depend on the reduced decomposition of the longest word of the Weyl group.
We show that this result does not hold for all such choices. We conjecture that it holds when the reduced decomposition is appropriately factorized.

\end{abstract}

\maketitle

\section{Introduction}

The aim of this paper is to generalize to the quantum setting the following classical result (the relevant definitions will be recalled in the next section).
Let $\mathfrak{g}$ be a complex simple Lie algebra and $\mathfrak{p}$ a parabolic subalgebra.
Let $\mathfrak{u_+}$, $\mathfrak{u_-}$ and $\mathfrak{l}$ be respectively the nilradical, the opposite nilradical and the Levi factor of $\mathfrak{p}$.
Then we have the commutation relations $[\mathfrak{u}_{+}, \mathfrak{u}_{-}] \subset \mathfrak{l}$.
In particular, this result holds when $\mathfrak{p}$ is of cominuscule type.

Quantized versions of the algebras appearing above can be defined from $U_q(\mathfrak{g})$, the quantized enveloping algebra of $\mathfrak{g}$.
Moreover we can define quantum root vectors using Lusztig's automorphisms. They depend on the choice of the reduced decomposition of the longest word of the Weyl group of $\mathfrak{g}$. The main result of this paper is the following.

\begin{theorem}
\label{thm:main-thm}
Let $\mathfrak{p}$ be a parabolic subalgebra of cominuscule type.
Then there exists a reduced decomposition of the longest word of the Weyl group of $\mathfrak{g}$ such that
\[
[E_{\xi_i}, F_{\xi_j}] \in U_q(\mathfrak{l}), \quad
\forall \xi_i, \xi_j \in \Delta(\mathfrak{u_+}).
\]
\end{theorem}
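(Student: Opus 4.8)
The plan is to first replace $w_0$ by a reduced decomposition adapted to the parabolic and then to reduce the statement to a single ``corner'' identity about products of quantum root vectors. Recall that, $\mathfrak{p} = \mathfrak{p}_s$ being cominuscule, the simple root $\alpha_s$ outside the Levi has coefficient one in the highest root, so every positive root has $\alpha_s$-coefficient $0$ or $1$, the nilradical $\mathfrak{u}_+$ is abelian, and $\Delta(\mathfrak{u}_+)$ consists of the positive roots with $[\alpha:\alpha_s]=1$ (writing $[\alpha:\alpha_s]$ for the coefficient of $\alpha_s$ in $\alpha$); since the sum of two elements of $\Delta(\mathfrak{u}_+)$ has $\alpha_s$-coefficient $2$ and is never a root, $\Delta(\mathfrak{u}_+)$ is biconvex with complement $\Delta^+(\mathfrak{l})$. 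Let $w_{0,\mathfrak{l}}$ be the longest element of the Weyl group of $\mathfrak{l}$ and let $w_{\mathfrak{u}}$ be the unique Weyl group element with $\mathrm{Inv}(w_{\mathfrak{u}}) = \Delta(\mathfrak{u}_+)$; one checks that $w_{\mathfrak{u}} = w_0 w_{0,\mathfrak{l}}$, that $\ell(w_{\mathfrak{u}}) = \dim\mathfrak{u}_+$, and that $w_0 = w_{0,\mathfrak{l}}\, w_{\mathfrak{u}}^{-1}$ with additive lengths. I would take the reduced decomposition of $w_0$ obtained by concatenating a reduced word of $w_{0,\mathfrak{l}}$ with one of $w_{\mathfrak{u}}^{-1}$: then $\Delta^+(\mathfrak{l})$ is an initial and $\Delta(\mathfrak{u}_+)$ a terminal segment of the associated convex order, the quantum root vectors attached to $\Delta^+(\mathfrak{l})$ are exactly the PBW generators of $U_q(\mathfrak{l})$, and each $E_\xi$, $F_\xi$ with $\xi \in \Delta(\mathfrak{u}_+)$ is $T_{w_{0,\mathfrak{l}}}$ applied to the quantum root vector built from the word of $w_{\mathfrak{u}}^{-1}$, the corresponding root being $w_{0,\mathfrak{l}}^{-1}\xi \in \Delta(\mathfrak{u}_+)$. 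As $T_{w_{0,\mathfrak{l}}}$ is a composite of the $T_j$ with $j$ a Levi node, it restricts to an algebra automorphism of $U_q(\mathfrak{l})$, so $[E_{\xi_i}, F_{\xi_j}] = T_{w_{0,\mathfrak{l}}}\big([E_{\xi_i'}, F_{\xi_j'}]\big)$ and it suffices to prove the claim with $w_0$ replaced by $w_{\mathfrak{u}}^{-1}$, i.e.\ for quantum root vectors indexed by all of $\Delta(\mathfrak{u}_+)$.

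Next I would use the triangular decomposition attached to this convex order: the spans $U_q(\mathfrak{u}_\pm)$ of the PBW monomials in the $E_\beta$, respectively $F_\beta$, with $\beta \in \Delta(\mathfrak{u}_+)$ are subalgebras — because $\Delta(\mathfrak{u}_+)$ is an interval in the convex order, so the Levendorskii--Soibelman relations keep these monomials among themselves — and multiplication gives a linear isomorphism $U_q(\mathfrak{u}_-) \otimes U_q(\mathfrak{l}) \otimes U_q(\mathfrak{u}_+) \to U_q(\mathfrak{g})$. Since every positive root has $\alpha_s$-coefficient in $\{0,1\}$, a weight count shows that an element of weight $\xi_i - \xi_j$ (which lies in the root lattice of $\mathfrak{l}$) decomposes uniquely as an element of $U_q(\mathfrak{l})$ plus a combination of ``corner'' terms $F_\gamma\, \ell_{\gamma\delta}\, E_\delta$ with $\gamma,\delta \in \Delta(\mathfrak{u}_+)$ and $\ell_{\gamma\delta} \in U_q(\mathfrak{l})$. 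As $F_{\xi_j} E_{\xi_i}$ is itself such a corner term, the theorem becomes equivalent to the assertion that straightening $E_{\xi_i} F_{\xi_j}$ into PBW form yields exactly $F_{\xi_j} E_{\xi_i}$ plus terms in $U_q(\mathfrak{l})$ — that is, no corner term $F_\gamma \ell_{\gamma\delta} E_\delta$ with $(\gamma,\delta) \neq (\xi_j,\xi_i)$ survives. For a non-adapted reduced word such spurious corner terms do appear, which is precisely why the factorization above is needed.

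This last assertion is the main obstacle, and I would prove it by induction along the convex order. The base case is $\xi_i = \alpha_s$, where $E_{\xi_i} = E_s$: writing $F_{\xi_j}$ as a sum of monomials $a_t F_s b_t$ with $a_t, b_t$ monomials in the Levi generators $F_k$ (legitimate since $[\xi_j:\alpha_s]=1$), the Leibniz rule together with $[E_s, F_k]=0$ and the fact that $[E_s,F_s]$ is a torus element gives $[E_s, F_{\xi_j}] \in U_q(\mathfrak{l})$ at once. For the inductive step I would use that the degree-one piece $\bigoplus_{\beta \in \Delta(\mathfrak{u}_+)} \mathbb{C}\, E_\beta$ is an irreducible $U_q(\mathfrak{l})$-module under the adjoint action, with lowest weight vector $E_s$; hence every $E_{\xi_i}$ is a combination $\sum_l \mathrm{ad}(u_l)(E_s)$ with $u_l \in U_q(\mathfrak{l})$, and the problem reduces to showing $[\mathrm{ad}(u)(E_s), F_{\xi_j}] \in U_q(\mathfrak{l})$ for $u$ a generator of $U_q(\mathfrak{l})$. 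Expanding via the Hopf structure and the base case, this produces — besides manifestly Levi contributions — corner terms that must combine and cancel, and here the abelianness of $\mathfrak{u}_+$ (the non-existence of roots $\beta+\beta'$ with $\beta,\beta' \in \Delta(\mathfrak{u}_+)$) is the decisive input. The delicate point, and the reason one cannot simply ``peel off a braid operator and recurse'', is that $T_s$ does \emph{not} preserve $U_q(\mathfrak{l})$; the induction must therefore be organized around the adjoint action, which does respect $U_q(\mathfrak{l})$, while carrying along an explicit description of the corner part of each intermediate product. A more computational alternative would be a case-by-case treatment of the finitely many families of cominuscule parabolics — the Grassmannians in type $A$, the Lagrangian Grassmannian in type $C$, the quadrics and the spinor variety in types $B$ and $D$, and the two exceptional cases — using in each family the explicit ``staircase'' reduced word of $w_{\mathfrak{u}}^{-1}$, for which the quantum root vectors and the corner part of $E_{\xi_i}F_{\xi_j}$ can be written down and evaluated directly. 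One then transports the conclusion back through $T_{w_{0,\mathfrak{l}}}$.
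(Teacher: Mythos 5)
Your structural setup is sound and genuinely different from the paper's: you observe that $T_{w_{0,\mathfrak{l}}}$ preserves $U_q(\mathfrak{l})$ (correct, since each $T_j$ with $j$ a Levi node does), use this to reduce to quantum root vectors built from a reduced word of $w_{\mathfrak{u}}^{-1}$, correctly identify the base case $\xi_i = \alpha_s$, and the base case argument via $[E_s, F_k] = 0$ for Levi $k$ is valid. But the inductive step is where the whole difficulty of the theorem lives, and you do not prove it. The assertion that $[\mathrm{ad}(u)(E_s), F_{\xi_j}]$ yields ``corner terms that must combine and cancel'' using abelianness is exactly the content that needs to be established, and it is not at all automatic: the map $x \mapsto [x, F_{\xi_j}]$ is not $U_q(\mathfrak{l})$-equivariant for the adjoint action, so knowing $[E_s, F_{\xi_j}] \in U_q(\mathfrak{l})$ gives you no direct handle on $[\mathrm{ad}(F_k)(E_s), F_{\xi_j}]$. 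Unwinding $\mathrm{ad}(F_k)(E_s) = (F_k E_s - E_s F_k) K_k$ produces a term $E_s F_k K_k F_{\xi_j}$ where $F_k$ and $F_{\xi_j}$ must first be reordered via Levendorskii--Soibelman, introducing new radical $F_\gamma$'s; controlling these is precisely the problem, and you offer no mechanism for it.

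There is also a scope issue. Your proposal, as stated, would work for \emph{any} reduced word of $w_{\mathfrak{u}}^{-1}$, i.e.\ it would establish the full Conjecture~\ref{con:conjecture}, not merely Theorem~\ref{thm:main-thm}. That should make you suspicious: the paper only proves the theorem, and does so by fixing a very particular convex order on the radical roots (the row-by-row enumeration of the poset, compatible with the dominance partial order), and the combinatorial heart of the proof (Lemma~\ref{lem:lines}) is verified case by case against this specific order. The paper's route is entirely different from yours: it takes Xi's commutator formula as a black box, reduces the theorem to a purely combinatorial statement about expressing $\xi_i - \xi_j$ as a signed combination of other radical roots plus Levi roots (Problem~\ref{prob:equality}), and then kills the unwanted coefficients using the partial order (Lemmas~\ref{lem:eli-ord}, \ref{lem:eli-diff}) together with the explicit box-shaped description of the cominuscule posets. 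Abelianness of $\mathfrak{u}_+$ enters only combinatorially (the sum of two radical roots is never a root). Your adjoint-action induction is an interesting alternative strategy, and if it could be pushed through it would be stronger, but as written there is a genuine gap at the inductive step, and the fallback ``case-by-case straightening of $E_{\xi_i} F_{\xi_j}$'' is substantially heavier than the paper's case check and is likewise not carried out.
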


We will also show that this result is not true for all reduced decompositions.
Nevertheless, we conjecture that it holds for all reduced decompositions which factor through the longest word of the Weyl group of $\mathfrak{l}$. This is formulated in \autoref{con:conjecture}.

The theorem stated above shows that, in the case of parabolic subalgebras of cominuscole type, a significant part of the classical Lie algebra structure is preserved. This is in line with other results that we now briefly recall, which provide some motivations for this study.

The quotient $\mathfrak{g} / \mathfrak{p}$, where $\mathfrak{p}$ is of cominuscole type, corresponds infinitesimally to an irreducible generalized flag manifold.
The quantized versions of these spaces enjoy many properties which are close to their classical counterparts.
Important results were obtained by Heckenberger and Kolb in \cite{flagcalc1, flagcalc2}: they show that these spaces admit a canonical $q$-analogue of the de Rham complex, with the homogenous components having the same dimensions as in the classical case.
The first-order differential calculi coming from this construction can be implemented by commutators with Dirac operators, in the sense of spectral triples, as shown by Krähmer in \cite{qflag}.
In general it is not known, however, if these operators have compact resolvent, which is an important requirement for a spectral triple.
This difficulty motivated the subsequent construction of Krähmer and Tucker-Simmons in \cite{qflag2}, which is seen as a first step towards proving a quantum analogue of the Parthasarathy formula.
This classical result expresses the square of the Dirac operator in terms of the Casimir and some constants, which readily allows the computation of its spectrum.

We believe that \autoref{thm:main-thm} provides a contribution towards this goal.
However, this result can not be applied straightforwardly.
Indeed, in this case what we need are commutation relations between the quantum root vectors $E_{\xi_i}$ and $E_{\xi_j}^*$ (see the discussion in \cite{mat15}, where the case of projective spaces is treated in detail). Here $*$ denotes the involution corresponding to the compact real form of $\mathfrak{g}$.
It seems likely that \autoref{thm:main-thm} should have a counterpart in this setting, but of course this remains to be proven.

The structure of the paper is as follows. In \autoref{sec:notation} we fix our notations and conventions.
In \autoref{sec:commutation} we discuss general results on commutation relations and outline our approach to the proof of the theorem.
In \autoref{sec:convex-ord} we obtain some results related to the convex ordering of the radical roots.
In \autoref{sec:posets} we recall the explicit descriptions of the posets of radical roots.
In \autoref{sec:enumeration} we introduce a specific enumeration of the radical roots.
In \autoref{sec:proof} we complete the proof of the theorem.
Finally in \autoref{sec:counter} we show that the result does not hold for all choices of reduced decomposition.

\section{Notations and conventions}
\label{sec:notation}

In this section we fix some notations and briefly review some facts about complex simple Lie algebras, parabolic subalgebras and quantized enveloping algebras.

\subsection{Parabolic subalgebras}

Let $\mathfrak{g}$ be a finite-dimensional complex simple Lie algebra with a fixed Cartan subalgebra $\mathfrak{h}$.
We denote by $\Delta(\mathfrak{g})$ the root system, by $\Delta^{+}(\mathfrak{g})$ the positive roots and by $\Pi = \{ \alpha_{1}, \cdots, \alpha_{r} \}$ the simple roots.
Denote by $a_{ij}$ the entries of the Cartan matrix and by $(\cdot, \cdot)$ the usual invariant bilinear form on $\mathfrak{h}^{*}$.
In particular, in the simply-laced case we have $(\alpha_{i}, \alpha_{j}) = a_{ij}$.
Let $S \subset \Pi$ be a subset of the simple roots. Then we set
$$
\Delta(\mathfrak{l}) = \mathrm{span}(S) \cap \Delta(\mathfrak{g}), \quad
\Delta(\mathfrak{u}_{+}) = \Delta^{+}(\mathfrak{g}) \backslash \Delta^{+}(\mathfrak{l}).
$$
In terms of these roots we define
$$
\mathfrak{l} = \mathfrak{h} \oplus \bigoplus_{\alpha \in \Delta(\mathfrak{l})} \mathfrak{g}_{\alpha}, \quad
\mathfrak{u}_{\pm} = \bigoplus_{\alpha \in \Delta(\mathfrak{u}_{+})} \mathfrak{g}_{\pm \alpha}, \quad
\mathfrak{p} = \mathfrak{l} \oplus \mathfrak{u}_{+}.
$$
It follows that $\mathfrak{l}$ and $\mathfrak{u}_{\pm}$ are Lie subalgebras of $\mathfrak{g}$.
We call $\mathfrak{p}$ the \emph{standard parabolic subalgebra} associated to $S$ (and omit $S$ from the notation). The subalgebra $\mathfrak{l}$ is reductive and is called the \emph{Levi factor} of $\mathfrak{p}$, while $\mathfrak{u}_{+}$ is a nilpotent ideal of $\mathfrak{p}$ called the \emph{nilradical}. We refer to the roots of $\Delta(\mathfrak{u}_{+})$ as the \emph{radical roots}.
We have the commutation relations $[\mathfrak{u}_{+}, \mathfrak{u}_{-}] \subset \mathfrak{l}$.

The adjoint action of $\mathfrak{p}$ on $\mathfrak{g}$ descends to an action on $\mathfrak{g} / \mathfrak{p}$. The decomposition $\mathfrak{g} = \mathfrak{u}_{-} \oplus \mathfrak{p}$ gives $\mathfrak{g} / \mathfrak{p} \cong \mathfrak{u}_{-}$ as $\mathfrak{l}$-modules.
We say that $\mathfrak{p}$ is of \emph{cominuscule type} if $\mathfrak{g} / \mathfrak{p}$ is a simple $\mathfrak{p}$-module. The following well-known result readily allows to classify all cominuscule parabolics.

\begin{proposition}
A parabolic subalgebra $\mathfrak{p}$ is cominuscule if and only if it corresponds to $S = \Pi \backslash \{\alpha_t\}$, where the simple root $\alpha_t$ appears with multiplicity $1$ in the highest root of $\mathfrak{g}$.
\end{proposition}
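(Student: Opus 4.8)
The plan is to reduce the assertion to a statement about the $\mathfrak{l}$-module $\mathfrak{u}_{-}$ and then read it off from the grading of $\mathfrak{g}$ determined by $S$. First, since $[\mathfrak{u}_{+}, \mathfrak{u}_{-}] \subseteq \mathfrak{l} \subseteq \mathfrak{p}$, the nilradical $\mathfrak{u}_{+}$ acts trivially on $\mathfrak{g}/\mathfrak{p}$, so the $\mathfrak{p}$-submodules of $\mathfrak{g}/\mathfrak{p}$ are exactly its $\mathfrak{l}$-submodules; combined with the isomorphism $\mathfrak{g}/\mathfrak{p} \cong \mathfrak{u}_{-}$ of $\mathfrak{l}$-modules, this shows that $\mathfrak{p}$ is cominuscule if and only if $\mathfrak{u}_{-}$ is a simple $\mathfrak{l}$-module. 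Writing $\Pi \setminus S = \{\alpha_{t_{1}}, \dots, \alpha_{t_{k}}\}$, the tuple of coefficients of $\alpha_{t_{1}}, \dots, \alpha_{t_{k}}$ defines a $\mathbb{Z}^{k}$-grading $\mathfrak{g} = \bigoplus_{m} \mathfrak{g}_{m}$ with $\mathfrak{g}_{0} = \mathfrak{l}$, where $\mathfrak{u}_{+}$ (resp. $\mathfrak{u}_{-}$) is the sum of the pieces $\mathfrak{g}_{m}$ with $m$ having all coordinates $\geq 0$ (resp. all $\leq 0$) and not all zero. For each $j$ there is an element $d_{j}$ of the Cartan subalgebra acting on $\mathfrak{g}_{m}$ by the $j$-th coordinate of $m$; since $d_{j}$ annihilates $\mathfrak{l}$, it is central in $\mathfrak{l}$.

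For the direction ``cominuscule $\Rightarrow$ condition'', suppose $\mathfrak{u}_{-}$ is a simple $\mathfrak{l}$-module. By Schur's lemma each central element $d_{j}$ acts on it by a single scalar. But $d_{1}$ acts by $-1$ on the root space $\mathfrak{g}_{-\alpha_{t_{1}}} \subseteq \mathfrak{u}_{-}$, while if $k \geq 2$ it acts by $0$ on $\mathfrak{g}_{-\alpha_{t_{2}}} \subseteq \mathfrak{u}_{-}$; hence $k = 1$ and $S = \Pi \setminus \{\alpha_{t}\}$ for a single $\alpha_{t}$. Letting $m_{t} \geq 1$ denote the coefficient of $\alpha_{t}$ in the highest root $\theta$, the element $d := d_{1}$ now acts by $-1$ on $\mathfrak{g}_{-\alpha_{t}} \subseteq \mathfrak{u}_{-}$ and by $-m_{t}$ on $\mathfrak{g}_{-\theta} \subseteq \mathfrak{u}_{-}$, which forces $m_{t} = 1$.

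For the converse, assume $S = \Pi \setminus \{\alpha_{t}\}$ with $m_{t} = 1$. Then the grading is concentrated in degrees $-1, 0, 1$, so $\mathfrak{u}_{-} = \mathfrak{g}_{-1}$, and it remains to prove that $\mathfrak{g}_{-1}$ is a simple $\mathfrak{l}$-module. The key input is the standard chain lemma for root systems: any $\gamma \in \Delta(\mathfrak{u}_{+})$ of degree $1$ satisfies $\gamma - \alpha_{t} \in \mathrm{span}_{\geq 0}(S)$ because $\theta$ is the highest root, and therefore, if $\gamma \neq \alpha_{t}$, there is a chain $\alpha_{t} = \gamma_{0} < \gamma_{1} < \dots < \gamma_{l} = \gamma$ of roots in which every difference $\gamma_{i+1} - \gamma_{i}$ is a simple root, necessarily lying in $S$. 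Since the root vectors attached to the roots of $S$ lie in $\mathfrak{l}$ and act with nonzero brackets along such chains, a nonzero $\mathfrak{l}$-submodule $W$ of $\mathfrak{g}_{-1}$ --- which, being stable under the Cartan subalgebra, is a sum of root spaces $\mathfrak{g}_{-\gamma}$ --- can first be moved up to contain the line $\mathfrak{g}_{-\alpha_{t}}$ and then moved down along every such chain to contain each $\mathfrak{g}_{-\gamma}$ with $\gamma$ of degree $1$; hence $W = \mathfrak{g}_{-1}$. Together the two implications give the proposition.

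I expect the only real obstacle to be this last step, the irreducibility of $\mathfrak{g}_{-1}$: the grading bookkeeping and the Schur-lemma argument are routine, but one must invoke (or reprove) the chain lemma and check carefully that the connecting steps stay among the degree-one radical roots and use only root vectors that lie in $\mathfrak{l}$.
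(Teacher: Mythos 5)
The paper states this proposition as well-known and gives no proof, so your argument stands alone. The gap is in your opening reduction: the inclusion $[\mathfrak{u}_{+}, \mathfrak{u}_{-}] \subseteq \mathfrak{l}$ (and even $\subseteq \mathfrak{p}$) is \emph{false} for a general parabolic. For instance take $\mathfrak{g} = \mathfrak{sl}_{4}$ and $S = \{\alpha_{2}\}$; then $\alpha_{3}$ and $\alpha_{1}+\alpha_{2}+\alpha_{3}$ are both radical roots and
\[
[\mathfrak{g}_{\alpha_{3}}, \mathfrak{g}_{-(\alpha_{1}+\alpha_{2}+\alpha_{3})}] = \mathfrak{g}_{-(\alpha_{1}+\alpha_{2})} \subseteq \mathfrak{u}_{-},
\]
which lies neither in $\mathfrak{l}$ nor in $\mathfrak{p}$. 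So $\mathfrak{u}_{+}$ does not act trivially on $\mathfrak{g}/\mathfrak{p}$ in general, and the equivalence ``$\mathfrak{p}$-simple iff $\mathfrak{l}$-simple'' is not established. The inclusion $[\mathfrak{u}_{+}, \mathfrak{u}_{-}] \subseteq \mathfrak{l}$ holds precisely in the cominuscule case (the paper's own statement of it right after the general definitions is imprecise), so invoking it here is circular. The converse direction of your equivalence ($\mathfrak{l}$-simple $\Rightarrow$ $\mathfrak{p}$-simple) is trivially true and is all you use in the last paragraph, but the forward direction is exactly where you need Schur's lemma, and it is not justified.

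Fortunately the grading you set up already gives a clean fix that bypasses Schur's lemma entirely. Put $\mathfrak{g} = \bigoplus_{d}\mathfrak{g}_{d}$, graded by the \emph{total} coefficient of the removed simple roots, so $\mathfrak{l} = \mathfrak{g}_{0}$ and $\mathfrak{p} = \bigoplus_{d \geq 0}\mathfrak{g}_{d}$. Since $\mathfrak{u}_{+}$ raises degree and everything in degrees $\geq 0$ dies in $\mathfrak{g}/\mathfrak{p}$, the image of $\mathfrak{g}_{-1}$ in $\mathfrak{g}/\mathfrak{p}$ is a nonzero $\mathfrak{p}$-submodule. If $\mathfrak{p}$ is cominuscule this forces every radical root to have total degree $1$; but the highest root has total degree $\sum_{j} m_{t_{j}} \geq 2$ as soon as $k \geq 2$ or $m_{t} \geq 2$, so $k = 1$ and $m_{t} = 1$. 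Moreover in that case $\mathfrak{u}_{-} = \mathfrak{g}_{-1}$ and $\mathfrak{u}_{+}$ really does act trivially, so your $\mathfrak{l}$-module reduction becomes valid \emph{a posteriori}. Your converse, proving $\mathfrak{l}$-irreducibility of $\mathfrak{g}_{-1}$ via the chain lemma, is in order; the obstacle you flagged there is the standard one and can be handled as you outline. You identified the wrong step as the weak link --- the first paragraph, not the last, is where the argument breaks.
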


Moreover, it is clear from this result that all radical roots contain $\alpha_t$ with multiplicity $1$. The classification of cominuscole parabolics is reported in \autoref{tab:class-comin}, see for example \cite{schubert}.

\begin{table}[h]

\begin{tabular}{|c|c|c|}
\hline
Root system & Dynkin diagram & Nomenclature
\tabularnewline
\hline
\hline

$A_r$ &

\begin{tikzpicture}[
root/.style = {circle, draw = black, fill = white, thick, inner sep = 0pt, minimum size = 3mm},
rootlabel/.style = {inner sep = 0pt, scale = .8}
]

\draw (0,0) -- (1,0) node {};
\draw [dotted] (1,0) -- (2,0) node {};
\draw [dotted] (2,0) -- (3,0) node {};
\draw (3,0) -- (4,0) node {};

\node at (0, 0) [root] {};
\node at (1, 0) [root] {};
\node at (2, 0) [root, fill = black] {};
\node at (3, 0) [root] {};
\node at (4, 0) [root] {};

\node at (0,-.35) [rootlabel] {$1$};
\node at (1,-.35) [rootlabel] {$2$};
\node at (2,-.35) [rootlabel] {$k$};
\node at (3,-.35) [rootlabel] {$r - 1$};
\node at (4,-.35) [rootlabel] {$r$};

\end{tikzpicture}

&

Grassmannian $Gr(k, r)$

\tabularnewline
\hline

$B_r$ &

\begin{tikzpicture}[
root/.style = {circle, draw = black, fill = white, thick, inner sep = 0pt, minimum size = 3mm},
rootlabel/.style = {inner sep = 0pt, scale = .8}
]

\draw (0,0) -- (1,0) node {};
\draw [dotted] (1,0) -- (2,0) node {};
\draw (2,0) -- (3,0) node {};
\draw (3,.05) -- (4,.05) node {};
\draw (3,-.05) -- (4,-.05) node {};
\draw (3.4,.2) -- (3.6,0) node {};
\draw (3.4,-.2) -- (3.6,0) node {};

\node at (0, 0) [root, fill = black] {};
\node at (1, 0) [root] {};
\node at (2, 0) [root] {};
\node at (3, 0) [root] {};
\node at (4, 0) [root] {};

\node at (0,-.35) [rootlabel] {$1$};
\node at (1,-.35) [rootlabel] {$2$};
\node at (2,-.35) [rootlabel] {$r - 2$};
\node at (3,-.35) [rootlabel] {$r - 1$};
\node at (4,-.35) [rootlabel] {$r$};

\end{tikzpicture}

&

Odd dimensional quadric $\mathbb{Q}^{2r - 1}$

\tabularnewline
\hline

$C_r$ &

\begin{tikzpicture}[
root/.style = {circle, draw = black, fill = white, thick, inner sep = 0pt, minimum size = 3mm},
rootlabel/.style = {inner sep = 0pt, scale = .8}
]

\draw (0,0) -- (1,0) node {};
\draw [dotted] (1,0) -- (2,0) node {};
\draw (2,0) -- (3,0) node {};
\draw (3,.05) -- (4,.05) node {};
\draw (3,-.05) -- (4,-.05) node {};
\draw (3.6,.2) -- (3.4,0) node {};
\draw (3.6,-.2) -- (3.4,0) node {};

\node at (0, 0) [root] {};
\node at (1, 0) [root] {};
\node at (2, 0) [root] {};
\node at (3, 0) [root] {};
\node at (4, 0) [root, fill = black] {};

\node at (0,-.35) [rootlabel] {$1$};
\node at (1,-.35) [rootlabel] {$2$};
\node at (2,-.35) [rootlabel] {$r - 2$};
\node at (3,-.35) [rootlabel] {$r - 1$};
\node at (4,-.35) [rootlabel] {$r$};

\end{tikzpicture}

&

Lagrangian Grassmannian $LG(r, 2r)$

\tabularnewline
\hline

$D_r$ &

\begin{tikzpicture}[
root/.style = {circle, draw = black, fill = white, thick, inner sep = 0pt, minimum size = 3mm},
rootlabel/.style = {inner sep = 0pt, scale = .8}
]

\draw (0,0) -- (1,0) node {};
\draw [dotted] (1,0) -- (2,0) node {};
\draw (2,0) -- (3,.25) node {};
\draw (2,0) -- (3,-.25) node {};

\node at (0, 0) [root, fill = black] {};
\node at (1, 0) [root] {};
\node at (2, 0) [root] {};
\node at (3, .25) [root] {};
\node at (3, -.25) [root] {};

\node at (0,-.35) [rootlabel] {$1$};
\node at (1,-.35) [rootlabel] {$2$};
\node at (2,-.35) [rootlabel] {$r - 2$};
\node at (3.7,.25) [rootlabel] {$r - 1$};
\node at (3.4,-.25) [rootlabel] {$r$};

\end{tikzpicture}

& Even dimensional quadric $\mathbb{Q}^{2r - 2}$ 

\tabularnewline
\hline

$D_r$ &

\begin{tikzpicture}[
root/.style = {circle, draw = black, fill = white, thick, inner sep = 0pt, minimum size = 3mm},
rootlabel/.style = {inner sep = 0pt, scale = .8}
]

\draw (0,0) -- (1,0) node {};
\draw [dotted] (1,0) -- (2,0) node {};
\draw (2,0) -- (3,.25) node {};
\draw (2,0) -- (3,-.25) node {};

\node at (0,0) [root] {};
\node at (1,0) [root] {};
\node at (2,0) [root] {};
\node at (3,.25) [root, fill = black] {};
\node at (3,-.25) [root, fill = black] {};

\node at (0,-.35) [rootlabel] {$1$};
\node at (1,-.35) [rootlabel] {$2$};
\node at (2,-.35) [rootlabel] {$r - 2$};
\node at (3.7,.25) [rootlabel] {$r - 1$};
\node at (3.4,-.25) [rootlabel] {$r$};

\end{tikzpicture}

& Orthogonal Grassmannian $OG(r + 1, 2r + 2)$

\tabularnewline
\hline 

$E_6$ &

\begin{tikzpicture}[
root/.style = {circle, draw = black, fill = white, thick, inner sep = 0pt, minimum size = 3mm},
rootlabel/.style = {inner sep = 0pt, scale = .8}
]

\draw (0,0) -- (1,0) node {};
\draw (1,0) -- (2,0) node {};
\draw (2,0) -- (3,0) node {};
\draw (3,0) -- (4,0) node {};
\draw (2,0) -- (2,.75) node {};

\node at (0, 0) [root, fill = black] {};
\node at (1, 0) [root] {};
\node at (2, 0) [root] {};
\node at (3, 0) [root] {};
\node at (4, 0) [root, fill = black] {};
\node at (2,.75) [root] {};

\node at (0,-.35) [rootlabel] {$1$};
\node at (1,-.35) [rootlabel] {$2$};
\node at (2,-.35) [rootlabel] {$3$};
\node at (3,-.35) [rootlabel] {$4$};
\node at (4,-.35) [rootlabel] {$5$};
\node at (2.35,.75) [rootlabel] {$6$};

\end{tikzpicture}

&

Cayley plane $\mathbb{OP}^2$

\tabularnewline
\hline

$E_7$ &

\begin{tikzpicture}[
root/.style = {circle, draw = black, fill = white, thick, inner sep = 0pt, minimum size = 3mm},
rootlabel/.style = {inner sep = 0pt, scale = .8}
]

\draw (0,0) -- (1,0) node {};
\draw (1,0) -- (2,0) node {};
\draw (2,0) -- (3,0) node {};
\draw (3,0) -- (4,0) node {};
\draw (4,0) -- (5,0) node {};
\draw (2,0) -- (2,.75) node {};

\node at (0, 0) [root] {};
\node at (1, 0) [root] {};
\node at (2, 0) [root] {};
\node at (3, 0) [root] {};
\node at (4, 0) [root] {};
\node at (5, 0) [root, fill = black] {};
\node at (2,.75) [root] {};

\node at (0,-.35) [rootlabel] {$1$};
\node at (1,-.35) [rootlabel] {$2$};
\node at (2,-.35) [rootlabel] {$3$};
\node at (3,-.35) [rootlabel] {$4$};
\node at (4,-.35) [rootlabel] {$5$};
\node at (5,-.35) [rootlabel] {$6$};
\node at (2.35,.75) [rootlabel] {$7$};

\end{tikzpicture}

&

(Unnamed) $G_\omega(\mathbb{O}^3, \mathbb{O}^6)$

\tabularnewline
\hline

\end{tabular}

\medskip

\caption{Classification of cominuscole parabolics. The black node corresponds to the simple root $\alpha_t$. If there is more than one, then they are equivalent choices.}
\label{tab:class-comin}

\end{table}

\subsection{Quantized enveloping algebras}

We briefly review some facts about quantized enveloping algebras. General references for this topic are the books \cite{klsc}, \cite{chpr} and \cite{lus-book}.
With the previous conventions for complex simple Lie algebras, let $d_i = (\alpha_i, \alpha_i) / 2$. Let $q \in \mathbb{C}$ and define $q_i = q^{d_i}$.
The \emph{quantized universal enveloping algebra} $U_q(\mathfrak{g})$ is generated by the elements $E_i$, $F_i$, $K_i$, $K_i^{-1}$, for $1\le i\le r$ and with $r$ the rank of $\mathfrak{g}$, satisfying the relations
\begin{gather*}
K_i K_i^{-1} = K_i^{-1} K_i=1,\ \
K_i K_j = K_j K_i, \\
K_i E_j K_i^{-1} = q_i^{a_{ij}} E_j,\ \
K_i F_j K_i^{-1} = q_i^{-a_{ij}} F_j, \\
E_i F_j - F_j E_i = \delta_{ij} \frac{K_i - K_i^{-1}}{q_i - q_i^{-1}},
\end{gather*}
plus the quantum analogue of the Serre relations.
The Hopf algebra structure is defined by
\begin{gather*}
\Delta(K_i)=K_i\otimes K_i, \quad
\Delta(E_i)=E_i\otimes1+ K_i\otimes E_i, \quad
\Delta(F_i)=F_i\otimes K_i^{-1}+1\otimes F_i, \\
S(K_{i}) = K_{i}^{-1}, \quad
S(E_{i}) = - K_{i}^{-1} E_{i}, \quad
S(F_{i}) = - F_{i} K_{i}, \quad
\varepsilon(K_i)=1, \quad
\varepsilon(E_i)=\varepsilon(F_i)=0.
\end{gather*}
For $q \in \mathbb{R}$, the \emph{compact real form} of $U_q(\mathfrak{g})$ is defined by
\[
K_{i}^{*} = K_{i}, \quad
E_{i}^{*} = K_{i} F_{i}, \quad
F_{i}^{*} = E_{i} K_{i}^{-1}.
\]

Let $\mathfrak{l}$ be the Levi factor corresponding to a parabolic subalgebra of $\mathfrak{g}$ defined by $S \subset \Pi$. Then the quantized enveloping algebra of the Levi factor is defined as
\[
U_q(\mathfrak{l}) = \{ \textrm{subalgebra of $U_q(\mathfrak{g})$ generated by $K_i^{\pm 1}$ and $E_j, F_j$ with $j \in S$} \}.
\]
This definition of the quantized Levi factor appears for example in \cite[Section 4]{quantum-flag}.

\section{Commutation relations}
\label{sec:commutation}

\subsection{Quantum root vectors}

Fix a reduced decomposition $w_0 = s_{i_1} \cdots s_{i_N}$ of the longest word of the Weyl group of $\mathfrak{g}$. Here $s_i$ is the reflection corresponding to $\alpha_i$. It is well known that all the positive roots can be obtained as $\beta_k = s_{i_1} \cdots s_{i_{k - 1}} (\alpha_{i_k})$ for $k = 1, \cdots, N$.

Now let $T_i$ be any version of the Lusztig automorphisms (see \autoref{rem:quant-root}).
The quantum root vectors are then defined by $E_{\beta_k} = T_{i_1} \cdots T_{i_{k - 1}} (E_{i_k})$ for $k = 1, \cdots, N$. They depend on the choice of the reduced decomposition of $w_0$. Similarly the quantum root vectors corresponding to the negative roots are defined by $F_{\beta_k} = T_{i_1} \cdots T_{i_{k - 1}} (F_{i_k})$ for $k = 1, \cdots, N$.

\begin{remark}
\label{rem:quant-root}
Lusztig introduces four different automorphisms denoted by $T^{\prime}_{i, e}$ and $T^{\prime \prime}_{i, e}$ with $e = \pm 1$, see \cite[Chapter 37]{lus-book}.
Let $\omega$ be the automorphism defined by
\[
\omega(E_i) = F_i, \
\omega(F_i) = E_i, \
\omega(K_i) = K_i^{-1}.
\]
Similarly let $\sigma$ be the anti-automorphism defined by
\[
\sigma(E_i) = E_i, \
\sigma(F_i) = F_i, \
\sigma(K_i) = K_i^{-1}.
\]
Then the various automorphisms are related by \cite[37.2.4]{lus-book}
\[
\omega T^{\prime}_{i, e} = T^{\prime \prime}_{i, e} \omega, \quad
\sigma T^{\prime}_{i, e} = T^{\prime \prime}_{i, -e} \sigma.
\]
\end{remark}

We will also need the anti-automorphism $\Omega$ defined by
\[
\Omega(E_i) = F_i, \
\Omega(F_i) = E_i, \
\Omega(K_i) = K^{-1}_i, \
\Omega(q) = q^{-1}.
\]
It satisfies the property $\Omega T_i = T_i \Omega$. Therefore we can also write $F_{\beta_k} = \Omega (E_{\beta_k})$.

We will use the notation $A = (a_1, \cdots, a_N)$ and define
\[
E^A = E^{a_1}_{\beta_1} \cdots E^{a_N}_{\beta_N}, \quad
F^A = \Omega (E^A) = F^{a_N}_{\beta_N} \cdots F^{a_1}_{\beta_1}.
\]
Similarly if $\lambda = c_1 \alpha_1 + \cdots + c_r \alpha_r$ is an element of the root lattice we will write
\[
K_\lambda = K_1^{c_1} \cdots K_r^{c_r}.
\]

\subsection{Commutation relations}

Having defined quantum root vectors, we can now discuss their commutation relations.
We will be interested in the case of quantum root vectors corresponding to radical roots.
Before going into that, we give a brief list of references for general results on commutation relations.
In the case of two quantum root vectors $E_{\beta_i}$ and $E_{\beta_j}$, commutation relations were derived in \cite{comm-rel} (see also the proof in \cite{con-pro}).
A version of this formula appears also in \cite{xi}. In this reference we also find a formula for the commutator $[E_{\beta_i}, F_{\beta_j}]$. This formula will be recalled below and will be our main tool.

\begin{remark}
The commutation relations of \cite{comm-rel} can be used to show that the algebra generated by the quantum root vectors $\{E_{\xi_i}\}$ is quadratic (at least when $w_0$ is factorized).
See \cite[Remark 3.18]{qflag2} and \cite[Section 5]{zwi}. For this reason we will not consider these commutation relations, but focus instead on the case $[E_{\xi_i}, F_{\xi_j}]$.
\end{remark}

In the next proposition we will derive some simple results for commutators in relation to $U_q(\mathfrak{l})$, the quantized enveloping algebra of the Levi factor. At this stage we are not assuming yet that the parabolic subalgebra is of cominuscole type.

\begin{proposition}
For any positive root $\beta_i$ we have $[E_{\beta_i}, F_{\beta_i}] \in U_q(\mathfrak{l})$.
Moreover let $\beta_i$, $\beta_j$ be positive roots. If $[E_{\beta_i}, F_{\beta_j}] \in U_q(\mathfrak{l})$ then we also have $[E_{\beta_j}, F_{\beta_i}] \in U_q(\mathfrak{l})$.
\end{proposition}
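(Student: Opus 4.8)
The plan is to establish both statements by exploiting two structural tools introduced above: the anti-automorphism $\Omega$, which exchanges $E$'s and $F$'s and commutes with the $T_i$, and the characterization of $U_q(\mathfrak{l})$ as a subalgebra stable under $\Omega$. For the first statement, fix a positive root $\beta_i$. The key observation is that the Lusztig automorphisms restrict nicely once we adapt the reduced decomposition: since $[E_{\beta_i},F_{\beta_i}]$ is built by applying $T_{i_1}\cdots T_{i_{i-1}}$ to $[E_{i_i},F_{i_i}]$, and $[E_{i_i},F_{i_i}] = (K_{i_i}-K_{i_i}^{-1})/(q_{i_i}-q_{i_i}^{-1}) \in U_q(\mathfrak{l})$ trivially (it lies in the torus part), the question reduces to whether $T_{i_1}\cdots T_{i_{i-1}}$ maps this torus element into $U_q(\mathfrak{l})$. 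But a Lusztig automorphism $T_j$ sends $K_\lambda$ to $K_{s_j(\lambda)}$ up to nothing else, so $T_{i_1}\cdots T_{i_{i-1}}\bigl((K_{i_i}-K_{i_i}^{-1})/(q_{i_i}-q_{i_i}^{-1})\bigr) = (K_{\beta_i}-K_{\beta_i}^{-1})/(q_{i_i}-q_{i_i}^{-1})$, which is a linear combination of elements $K_\lambda$ and hence lies in $U_q(\mathfrak{l})$. Thus $[E_{\beta_i},F_{\beta_i}]\in U_q(\mathfrak{l})$ always.

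For the second statement, suppose $[E_{\beta_i},F_{\beta_j}]\in U_q(\mathfrak{l})$. I would apply $\Omega$ to this element. Since $\Omega$ is an anti-automorphism with $\Omega(E_{\beta_k}) = F_{\beta_k}$ and $\Omega(F_{\beta_k}) = E_{\beta_k}$, we compute
\[
\Omega([E_{\beta_i},F_{\beta_j}]) = \Omega(E_{\beta_i}F_{\beta_j} - F_{\beta_j}E_{\beta_i}) = F_{\beta_j}^{\Omega}E_{\beta_i}^{\Omega} - E_{\beta_i}^{\Omega}F_{\beta_j}^{\Omega},
\]
where I write $X^\Omega = \Omega(X)$; being careful with the order reversal, this equals $E_{\beta_i}F_{\beta_j} - F_{\beta_j}E_{\beta_i}$ read in the opposite order, i.e. $-[E_{\beta_j},F_{\beta_i}]$ after relabeling $\Omega(E_{\beta_i})=F_{\beta_i}$ and $\Omega(F_{\beta_j})=E_{\beta_j}$. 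More precisely $\Omega([E_{\beta_i},F_{\beta_j}]) = \Omega(F_{\beta_j})\Omega(E_{\beta_i}) - \Omega(E_{\beta_i})\Omega(F_{\beta_j}) = E_{\beta_j}F_{\beta_i} - F_{\beta_i}E_{\beta_j} = [E_{\beta_j},F_{\beta_i}]$. Since $U_q(\mathfrak{l})$ is generated by $K_i^{\pm 1}$ and $E_j,F_j$ for $j\in S$, and $\Omega$ fixes this generating set (sending $E_j\mapsto F_j$, $F_j\mapsto E_j$, $K_i\mapsto K_i^{-1}$, all still in $U_q(\mathfrak{l})$), the subalgebra $U_q(\mathfrak{l})$ is $\Omega$-stable. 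Hence $[E_{\beta_j},F_{\beta_i}] = \Omega([E_{\beta_i},F_{\beta_j}]) \in U_q(\mathfrak{l})$, as desired.

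The steps are therefore: (1) recall the action of $T_i$ on torus elements and conclude the diagonal case; (2) verify $\Omega$ is an anti-automorphism stabilizing $U_q(\mathfrak{l})$ on generators; (3) carefully track the order reversal when applying $\Omega$ to a commutator to identify $\Omega([E_{\beta_i},F_{\beta_j}])$ with $[E_{\beta_j},F_{\beta_i}]$. I expect the main (albeit minor) obstacle to be bookkeeping in step (3): the anti-automorphism reverses products, so one must confirm that the sign and the index swap work out exactly to give the claimed commutator rather than its negative or a commutator with the roles of $E$ and $F$ interchanged in an unwanted way. A secondary subtlety is that $\Omega$ also inverts $q$, so one should check that $[E_{\beta_j},F_{\beta_i}]$ computed with Lusztig automorphisms for the reduced word is genuinely the $\Omega$-image and not merely a $q\mapsto q^{-1}$ twist of it; this follows from the identity $\Omega T_i = T_i \Omega$ stated above, which guarantees $F_{\beta_k} = \Omega(E_{\beta_k})$ with the same reduced decomposition.
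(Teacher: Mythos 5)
Your proof is correct and follows essentially the same route as the paper: the first part reduces $[E_{\beta_i},F_{\beta_i}]$ via the Lusztig automorphisms to a torus element $K_{\beta_i}$, which lies in $U_q(\mathfrak{l})$, and the second part applies the anti-automorphism $\Omega$, tracks the order reversal to get $\Omega([E_{\beta_i},F_{\beta_j}])=[E_{\beta_j},F_{\beta_i}]$, and uses $\Omega$-stability of $U_q(\mathfrak{l})$. Your additional bookkeeping (explicitly checking $\Omega$ on generators and flagging the $q\mapsto q^{-1}$ subtlety) is sound but does not change the argument.
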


\begin{proof}
For the first part, using the defining relations of the algebra we get
\[
[E_{\beta_k}, F_{\beta_k}]
=  T_{i_1} \cdots T_{i_{k - 1}} ([E_k, F_k])
=  T_{i_1} \cdots T_{i_{k - 1}} \big( \frac{K_k - K^{-1}_k}{q_k - q^{-1}_k} \big).
\]
The automorphism $T_w$ maps the element $K_\lambda$ to $K_{w(\lambda)}$ by the action of the Weyl group. Since the space $U_q(\mathfrak{l})$ contains all the generators $K_\lambda$ we obtain the result.

For the second part suppose that $[E_{\beta_i}, F_{\beta_j}] \in U_q(\mathfrak{l})$.
Using the properties of the anti-automorphism $\Omega$ we compute
\[
\Omega([E_{\beta_i}, F_{\beta_j}])
= [\Omega(F_{\beta_j}), \Omega(E_{\beta_i})]
= [E_{\beta_j}, F_{\beta_i}].
\]
Since $\Omega$ maps $U_q(\mathfrak{l})$ into itself we obtain the conclusion.
\end{proof}

Therefore to prove \autoref{thm:main-thm} we only need to consider the case $[E_{\xi_i}, F_{\xi_j}]$ with $i > j$.
To proceed we need to recall the formula of \cite[Lemma 3.2]{xi}.
In this result the quantum root vectors are defined using $T^{\prime}_{i, -1}$ (but for our purposes this is irrelevant).

\begin{theorem}[\cite{xi}]
Let $i > j$. Then we have
\[
[E_{\beta_i}, F_{\beta_j}] = \sum_{A, \lambda, B} \sigma(A, \lambda, B) F^A K_\lambda E^B,
\]
where the sums over $A,\lambda,B$ contain only a finite number of terms. Moreover if $\sigma(A, \lambda, B) \neq 0$ then we have $a_j = \cdots = a_N = 0$ and $b_1 = \cdots = b_i = 0$.
\end{theorem}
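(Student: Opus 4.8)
The plan is to separate a soft part — the existence of the expansion with finitely many terms — from the substantive part, which is the two support conditions on $A$ and $B$.

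For the soft part I would invoke the triangular decomposition of $U_q(\mathfrak{g})$: multiplication induces a linear isomorphism $U_q(\mathfrak{n}^-)\otimes U_q(\mathfrak{h})\otimes U_q(\mathfrak{n}^+)\cong U_q(\mathfrak{g})$, and for the chosen reduced decomposition the ordered monomials $\{F^A\}$ and $\{E^B\}$ are PBW bases of $U_q(\mathfrak{n}^-)$ and $U_q(\mathfrak{n}^+)$, while the $K_\lambda$ (indexed by the root lattice) form a basis of $U_q(\mathfrak{h})$. Hence $\{F^AK_\lambda E^B\}$ is a basis of $U_q(\mathfrak{g})$, and the single element $[E_{\beta_i},F_{\beta_j}]$ is tautologically a finite linear combination of these; this is the displayed identity, with $\sigma(A,\lambda,B)\ne 0$ for only finitely many triples.

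For the support conditions $a_j=\cdots=a_N=0$ and $b_1=\cdots=b_i=0$ I would argue as follows. The two conditions are treated by parallel arguments and are related by the anti-automorphism $\Omega$ (which satisfies $\Omega([E_{\beta_i},F_{\beta_j}])=[E_{\beta_j},F_{\beta_i}]$ and exchanges the two outer legs, $\Omega(F^AK_\lambda E^B)=F^BK_{-\lambda}E^A$), so I concentrate on $b_1=\cdots=b_i=0$, i.e.\ on showing that the $U_q(\mathfrak{n}^+)$-component of $[E_{\beta_i},F_{\beta_j}]$ lies in the subalgebra $U^+_{>i}$ generated by $E_{\beta_{i+1}},\dots,E_{\beta_N}$. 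The inputs are: (i) the ordering $\beta_1<\dots<\beta_N$ is convex, so by the Levendorskii--Soibelman / De~Concini--Kac--Procesi relations each interval $U^+_{[a,b]}=\operatorname{span}\{E_{\beta_a}^{c_a}\cdots E_{\beta_b}^{c_b}\}$ is a subalgebra, $U_q(\mathfrak{n}^+)\cong U^+_{\le i}\otimes U^+_{>i}$ as vector spaces, and there are explicit rules for reordering products of root vectors into PBW form; (ii) the triangularity of the coproduct for this ordering, $\Delta(E_{\beta_k})\in E_{\beta_k}\otimes 1+K_{\beta_k}\otimes E_{\beta_k}+U^{\ge 0}\otimes U^+_{>k}$ (and dually for $F_{\beta_k}$); (iii) the defining relation $E_kF_l-F_lE_k=\delta_{kl}(K_k-K_k^{-1})/(q_k-q_k^{-1})$. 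Concretely I would bring $E_{\beta_i}F_{\beta_j}$ to PBW normal form using the Drinfeld-double cross-commutation relation — equivalently, by splitting $E_{\beta_i}$ and $F_{\beta_j}$ through their coproducts, applying (iii) repeatedly, and identifying the coefficients via the Hopf pairing between $U_q(\mathfrak{n}^+)$ and $U_q(\mathfrak{n}^-)$. By (ii), beyond the principal term $F_{\beta_j}E_{\beta_i}$ every surviving $U_q(\mathfrak{n}^+)$-factor carries weight strictly above $\beta_i$ in the convex order, hence lies in $U^+_{>i}$; the principal term cancels in the commutator, leaving exactly the terms permitted by the statement. To make this rigorous I would set up an induction — on $\operatorname{ht}(\beta_i)$, or on $i-j$ — descending via Lusztig's braid operators to lower root vectors, arranging the descent so that the reduced word remains a full reduced word of $w_0$ at every stage.

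The step I expect to be the main obstacle is precisely this last bookkeeping. The conclusion is genuinely global: it uses that the convex order is an order on the full $\Delta^+$ (equivalently, a reduced word of $w_0$), so any inductive scheme that truncates the word — passing to a proper sub-root-system — breaks down, and one must also keep in mind that $T_k$ mixes $E$- and $F$-root vectors. When $E_{\beta_i}F_{\beta_j}$ is reordered, the straightening relations a priori produce monomials containing the forbidden generators $E_{\beta_k}$ with $k\le i$ (and dually $F_{\beta_k}$ with $k\ge j$), and the content of the lemma is that all of these cancel; establishing that cancellation requires tracking the $U_q(\mathfrak{n}^-)$-, $U_q(\mathfrak{h})$- and $U_q(\mathfrak{n}^+)$-supports simultaneously through the whole computation, and it is exactly here that the convexity of the full order enters in an essential way.
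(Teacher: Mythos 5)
The paper does not actually prove this statement: it is quoted from \cite[Lemma~3.2]{xi} and used as a black box, with the paper's own contribution beginning only afterwards. So there is no in-paper proof to measure you against, and the only question is whether your sketch would in fact produce one. Your ``soft part'' (finiteness via triangular decomposition and PBW) is fine, and the substantive part via the Rosso/Hopf pairing and Drinfeld-double cross-relation is a legitimate route, genuinely different from Xi's, who argues by induction with the Lusztig braid operators $T_{i_k}$. But two steps do not hold up as written.

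The $\Omega$-symmetry does not relate the two halves of the support condition for a \emph{fixed} $i>j$. Applying $\Omega$ to $[E_{\beta_i},F_{\beta_j}]$ produces $[E_{\beta_j},F_{\beta_i}]$, which is the \emph{opposite} regime (first index smaller), and your formula $\Omega(F^AK_\lambda E^B)=F^BK_{-\lambda}E^A$ sends the hypothesis $b_1=\cdots=b_i=0$ into the condition ``$a_1=\cdots=a_i=0$'' on that \emph{other} commutator, not into ``$a_j=\cdots=a_N=0$'' on the original one. Trying to come back by a second application of $\Omega$ is circular. The paper uses $\Omega$ precisely and only to pass from $i>j$ to $i<j$; the two support conditions for a fixed $i>j$ require two parallel but independent arguments (or one argument that controls both simultaneously).

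More importantly, the central claim ``by (ii), beyond the principal term every surviving $U_q(\mathfrak{n}^+)$-factor carries weight strictly above $\beta_i$'' does not follow from (ii) as stated. Your (ii) bounds only the \emph{second} tensor leg of $\Delta(E_{\beta_k})$, whereas the double's cross-commutation extracts the \emph{middle} leg of the iterated coproduct $\Delta^{(2)}(E_{\beta_i})$, and that middle leg is not a priori confined to the subalgebra generated by $E_{\beta_k}$ with $k\geq i$: the first leg of $\Delta(E_{\beta_i})$ already lies in (Cartan times) the subalgebra generated by $E_{\beta_l}$ with $l<i$, and coproducting once more puts such factors into the middle position. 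The constraint on the middle leg only emerges after one also uses the orthogonality/triangularity of the pairing $\langle F^A, E^B\rangle$ against the PBW basis — which is precisely the cancellation you defer as ``the main obstacle.'' So the key implication is asserted, not derived; you have correctly identified the ingredients and the point of difficulty, but the argument is an outline rather than a proof.
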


More explicitely this can be rewritten as
\begin{equation}
\label{eq:commutator}
[E_{\beta_i}, F_{\beta_j}] = \sum_{\substack{a_1, \cdots, a_{j - 1} \\ b_{i + 1}, \cdots, b_N}} \sum_\lambda \sigma(A, \lambda, B) F^{a_{j - 1}}_{\beta_{j - 1}} \cdots F^{a_1}_{\beta_1} K_\lambda E^{b_{i + 1}}_{\beta_{i + 1}} \cdots E^{b_N}_{\beta_N}.
\end{equation}

Recall that the elements of $U_q(\mathfrak{g})$ are graded by the root lattice. Using this grading, we obtain further conditions for the terms appearing on the RHS. We must have
\[
\beta_i - \beta_j = \sum_{k = i + 1}^{N} b_{k} \beta_k - \sum_{k = 1}^{j - 1} a_k \beta_k .
\]
Let us specialize this to the case of radical roots, which we denote by $\xi$. Then we write
\begin{equation}
\label{eq:identity}
\xi_i - \xi_j = \sum_{\xi > \xi_i} c_\xi \xi - \sum_{\xi < \xi_j} c_\xi \xi + \gamma,
\end{equation}
where $\gamma$ contains the sum over the non-radical roots and we have relabeled the various coefficients.
At this stage the coefficients of the terms appearing in $\gamma$ can have both signs. Later on we will consider particular orders such that $\gamma$ will contain only positive terms.

We now come to the main point. Suppose that if \eqref{eq:identity} holds then we have $c_\xi = 0$ for all $\xi > \xi_i$ and $\xi < \xi_j$. In other words, we only have non-radical roots on the RHS of the equality. Then we have $[E_{\xi_i}, F_{\xi_j}] \in U_q(\mathfrak{l})$.
Indeed the condition $c_\xi = 0$ implies that there are no quantum root vectors $E_{\xi_k}$ or $F_{\xi_k}$ appearing on the RHS of \eqref{eq:commutator}.
Using the definition of the quantized enveloping algebra of the Levi factor $U_q(\mathfrak{l})$, the conclusion is immediate.

\begin{remark}
This statement is independent of the chosen version of the automorphisms $T_i$ defining the quantum root vectors. Indeed, using the notation of \autoref{rem:quant-root}, we see that $U_q(\mathfrak{l})$ is stable under the action of $\omega$ and $\sigma$. This follows immediately from its definition.
\end{remark}

Therefore we are led to study the following problem.

\begin{problem}
\label{prob:equality}
Let $i > j$ and consider two radical roots $\xi_i$ and $\xi_j$. Suppose that
\[
\xi_i - \xi_j = \sum_{\xi > \xi_i} c_\xi \xi - \sum_{\xi < \xi_j} c_\xi \xi + \gamma,
\]
where $\gamma$ contains the sum over the non-radical roots (notice that this depends on the choice of the order $<$).
Does this imply that $c_\xi = 0$ for all $\xi > \xi_i$ and $\xi < \xi_j$?
\end{problem}

Clearly the answer to this problem can depend on the choice of the order.
Observe that, if \autoref{prob:equality} has a positive answer for all $i > j$, then it provides a proof of \autoref{thm:main-thm}, as pointed out above. For this reason we will now focus on this problem.

\section{Convex and partial orders}
\label{sec:convex-ord}

\subsection{Convex orders}

The choice of a reduced expression for the longest word of the Weyl group of $\mathfrak{g}$ gives a total order on the positive roots of $\mathfrak{g}$. This is simply given by $\beta_i < \beta_j$ if $i < j$.
Not all total orders can be realized in this way. It is known that they are characterized by the following condition \cite{papi}: if $\alpha, \ \beta$ are positive roots with $\alpha < \beta$ and $\alpha + \beta$ is a root, then we have $\alpha < \alpha + \beta < \beta$.
Such an order is usually called a \emph{convex order}.
Therefore choosing a reduced decomposition is the same as choosing such an order.

As mentioned in the introduction, the main result that we want to prove does not hold for any reduced decomposition. For this reason we will focus on reduced decompositions of a certain type, namely those which can be factorized in terms of the longest word of the Weyl group of $\mathfrak{l}$, the Levi factor. Therefore we will consider the case in which $\xi < \alpha$, for all radical roots $\xi$ and all non-radical roots $\alpha$.
The case $\xi > \alpha$ can be straightforwardly derived by reversing the ordering. For this reason we will focus on the $\xi < \alpha$ case.

Recall that on the positive roots there is a naturally defined \emph{partial order}: we have $\alpha \succ \beta$ if $\alpha - \beta$ is a non-negative linear combination of simple roots.
In the next lemma we will show that any convex order as above will be compatible with the partial order.

\begin{lemma}
Let $<$ be an ordering of the positive roots. Suppose that:
\begin{itemize}
\item the order $<$ is convex on the non-radical roots,
\item $\xi < \alpha$ for any radical root $\xi$ and non-radical root $\alpha$.
\end{itemize}
Then $<$ is a convex order if and only if $\xi \prec \xi^\prime$ implies $\xi < \xi^\prime$.
\end{lemma}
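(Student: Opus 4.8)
The plan is to prove the biconditional by establishing each direction separately, with the bulk of the work going into showing that the partial-order compatibility condition is sufficient for convexity (given the two standing hypotheses).

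\textbf{Necessity.} First I would dispose of the easy direction: if $<$ is a convex order, then $\xi \prec \xi'$ implies $\xi < \xi'$. Suppose $\xi \prec \xi'$, so $\xi' - \xi$ is a nonzero non-negative combination of simple roots. Since both are radical roots of a cominuscule parabolic, each contains $\alpha_t$ with multiplicity exactly $1$, so $\xi' - \xi$ is actually a non-negative combination of the simple roots in $S$ (i.e. a root or sum of roots of the Levi); in particular $\xi' - \xi$ is never a radical root. I would then argue by induction on the height of $\xi' - \xi$. If $\xi' - \xi$ is a simple root $\alpha_k$ (necessarily $k \in S$), then $\xi + \alpha_k = \xi'$ is a root, and convexity forces $\xi$, $\xi'$ to be comparable with $\xi + \alpha_k = \xi'$ strictly between them — so $\xi < \xi'$. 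For the inductive step, one writes $\xi' - \xi$ as a sum of two non-negative combinations realized by actual roots appearing along a chain from $\xi$ to $\xi'$ (using that in a root system, if $\mu \prec \nu$ one can pass from $\mu$ to $\nu$ by successively adding simple roots, staying inside the root system at each step); then chain together the strict inequalities obtained at each step. The one subtlety is ensuring the intermediate vectors are genuine positive roots, which is the standard fact that the interval $[\xi, \xi']$ in the root poset is connected by simple-root steps.

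\textbf{Sufficiency.} This is the main obstacle. Assume $\xi \prec \xi'$ implies $\xi < \xi'$; I must verify convexity, i.e. whenever $\alpha < \beta$ are positive roots with $\alpha + \beta$ a root, then $\alpha < \alpha + \beta < \beta$. I would split into cases according to whether $\alpha$, $\beta$ are radical or non-radical. If both are non-radical, then $\alpha + \beta$ is also non-radical (its $\alpha_t$-coefficient is $0$), and convexity on the non-radical roots is the first hypothesis. If both are radical, then $\alpha + \beta$ has $\alpha_t$-multiplicity $2$, which is impossible in a cominuscule parabolic, so this case is vacuous. If one of $\alpha$, $\beta$ is radical and the other non-radical, then $\alpha + \beta$ is radical (multiplicity $1$). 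Here I use the second hypothesis $\xi < \gamma$ for every radical $\xi$ and non-radical $\gamma$: since $\alpha < \beta$, the smaller of the two must be the radical one — say $\alpha = \xi$ is radical, $\beta = \gamma$ non-radical. Then $\alpha + \beta = \xi + \gamma$ is radical, so $\xi + \gamma < \gamma = \beta$ automatically by the second hypothesis; and $\xi = \alpha \prec \xi + \gamma$ since their difference is $\gamma$, a non-negative combination of simple roots, so the partial-order hypothesis gives $\alpha = \xi < \xi + \gamma$. Combining, $\alpha < \alpha + \beta < \beta$, as required. The case where $\beta$ is radical and $\alpha$ non-radical cannot occur, since then $\beta < \alpha$ by the second hypothesis, contradicting $\alpha < \beta$.

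\textbf{Where the difficulty lies.} The genuinely delicate part is the necessity direction's induction: making precise the claim that a saturated chain from $\xi$ up to $\xi'$ in the root poset can be chosen so that every intermediate element is a positive root, and that each single step $\mu \to \mu + \alpha_k$ has $\mu + \alpha_k$ a root so convexity applies. I would either cite a standard reference for connectivity of root-poset intervals, or, since both endpoints are radical roots, use the explicit poset descriptions of radical roots recalled later in the paper to make the chain completely concrete. The sufficiency direction, by contrast, is essentially a bookkeeping argument once one observes the decisive structural fact — that in a cominuscule parabolic every radical root has $\alpha_t$-multiplicity exactly $1$ — which immediately kills the "sum of two radical roots" case and pins down which summand is radical in the mixed case.
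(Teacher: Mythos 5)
Your proof is correct and follows essentially the same route as the paper: the necessity direction reduces to the case $\xi' - \xi = \alpha_k$ a simple root and then applies the convexity condition together with the hypothesis $\xi < \alpha_k$, while the sufficiency direction runs the same three-way case split (both non-radical, both radical, mixed), using $\alpha_t$-multiplicity to kill the radical-plus-radical case. The one place you are slightly more careful than the paper is the iteration step in necessity: the paper asserts it "follows by iterating this elementary one, since any positive root can be obtained from a positive root of lower height by adding a simple root," which, as you note, does not immediately produce a chain of roots from $\xi$ up to $\xi'$ — that requires the standard (and true) fact that intervals $[\xi,\xi']$ in the root poset are connected by simple-root steps, for which a clean argument can be given by choosing a simple root $\alpha_i$ appearing in $\xi'-\xi$ with $(\alpha_i, \xi'-\xi)>0$ and observing that either $\xi'-\alpha_i$ or $\xi+\alpha_i$ is again a root lying in the interval.
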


\begin{proof}
($\Rightarrow$)
It will be sufficient to consider the case when $\xi^\prime - \xi = \alpha_k$ for some simple root $\alpha_k$. Indeed the general case will follow by iterating this elementary one, since any positive root can be obtained from a positive root of lower height by adding a simple root.
Now suppose that $<$ is a convex order. Using our assumption that $\xi < \alpha$ and the convex ordering condition we have $\xi < \xi + \alpha_k < \alpha_k$. Since $\xi^\prime = \xi + \alpha_k$ it follows that $\xi < \xi^\prime$.

($\Leftarrow$)
We need to check the convex ordering conditions. Since any positive root is either non-radical or radical we have three cases: $\alpha + \alpha^\prime$, $\xi + \xi^\prime$ and $\xi + \alpha$, where $\alpha, \ \alpha^\prime$ are non-radical roots and $\xi, \ \xi^\prime$ are radical roots.
Convex ordering for the case $\alpha + \alpha^\prime$ holds by assumption. For the case $\xi + \xi^\prime$ we observe that this is never a root: indeed all radical roots contain $\alpha_t$ with multiplicity $1$, including the highest root, while the sum would have multiplicity $2$.
Finally consider the case $\xi + \alpha$ and suppose that it is a root. Then it is a radical root and we have $\xi \prec \xi + \alpha$. Then our assumptions guarantee that $\xi < \xi + \alpha < \alpha$ holds.
\end{proof}

The upshot is that we are free to choose any order on the radical roots, as long as it is compatible with the partial order, in the sense that $\xi \prec \xi^\prime$ implies $\xi < \xi^\prime$.
From this point on we will consider such a convex order, unless otherwise specified.

\subsection{Some lemmata}

We now return to \autoref{prob:equality}. Therefore let $i > j$ and suppose that for some radical roots $\xi_i$ and $\xi_j$ we have an equality of the form
\[
\xi_{i} - \xi_{j} = \sum_{\xi > \xi_{i}} c_{\xi} \xi - \sum_{\xi < \xi_{j}} c_{\xi} \xi + \gamma.
\]
Observe that, with the order as above, the term $\gamma$ is a positive linear combination of simple roots. This is because $\xi < \alpha$, so that non-radical roots can appear only with positive sign.

In the next lemma we will show that many terms in the sums, which are in principle allowed by the chosen order, can not appear if the equality holds.

\begin{lemma}
\label{lem:eli-ord}
If $\xi^{\prime} \succ \xi_{i}$ then $c_{\xi^{\prime}} = 0$.
Similarly if $\xi^{\prime} \prec \xi_{j}$
then $c_{\xi^{\prime}} = 0$.
\end{lemma}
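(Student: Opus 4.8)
The plan is to exploit the fact that, with the chosen convex order, the term $\gamma$ on the right-hand side is a \emph{nonnegative} combination of simple roots, together with the partial-order compatibility of the order. First I would consider the statement $c_{\xi'} = 0$ for $\xi' \succ \xi_i$. Suppose for contradiction that some such coefficient is nonzero, and among all radical roots $\xi'$ with $\xi' \succ \xi_i$ and $c_{\xi'} \neq 0$, pick one that is maximal for the partial order $\succ$ (such a maximal element exists since there are finitely many terms). Call it $\eta$. The idea is to extract the coefficient of the simple root $\alpha_t$ (the one deleted to form $S$) and play it against heights, or — more robustly — to pair both sides of the identity against a suitable fundamental coweight.

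More precisely, the key step is the following. Recall every radical root contains $\alpha_t$ with multiplicity exactly $1$, while $\gamma$, being a sum over non-radical roots with nonnegative coefficients, contains $\alpha_t$ with multiplicity $0$. Reading off the $\alpha_t$-coefficient on both sides of
\[
\xi_i - \xi_j = \sum_{\xi > \xi_i} c_\xi \xi - \sum_{\xi < \xi_j} c_\xi \xi + \gamma
\]
gives $1 - 1 = \sum_{\xi > \xi_i} c_\xi - \sum_{\xi < \xi_j} c_\xi$, so $\sum_{\xi > \xi_i} c_\xi = \sum_{\xi < \xi_j} c_\xi =: c$. Now I would pair the identity with the fundamental coweight $\varpi_t^\vee$ dual to $\alpha_t$, or equivalently use the height-type functional that is adapted to the grading; combined with the fact that $\xi \prec \xi'$ forces $\xi < \xi'$, and hence any $\xi > \xi_i$ appearing on the right is \emph{not} $\prec \xi_i$, I can bound heights. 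The cleaner route: since the order is compatible with $\succ$, and $\eta \succ \xi_i$, we get from $\eta \succ \xi_i$ that $\eta - \xi_i$ is a nonzero nonnegative combination of simple roots, none of which is $\alpha_t$. I would then show that the maximality of $\eta$ forces its contribution $c_\eta \eta$ to be ``unbalanced'': there is a simple root $\alpha_m$ appearing in $\eta$ whose coefficient cannot be cancelled by any other term on the right-hand side (the terms $\sum_{\xi<\xi_j}c_\xi\xi$ appear with a minus sign, and $\gamma \geq 0$, so they cannot help; other terms $c_{\xi}\xi$ with $\xi > \xi_i$ have $\xi \not\succ \eta$ by maximality, hence are ``lower'' in the relevant coordinate), contradicting that the left-hand side $\xi_i - \xi_j$ has bounded coefficients. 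The symmetric statement for $\xi' \prec \xi_j$ follows either by the same argument with signs reversed, or formally by applying the anti-automorphism $\Omega$ as in the earlier Proposition, which swaps the two sums and reverses the order.

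The main obstacle I anticipate is making the ``unbalanced coefficient'' argument airtight without case analysis on the type of $\mathfrak{g}$: it is not a priori obvious that a partial-order-maximal $\eta$ among the right-hand terms has a simple-root coordinate that genuinely cannot be matched, since the roots $\xi < \xi_j$ and the positive combination $\gamma$ could conspire — though the signs ($-c_\xi$ and $+\gamma$) are exactly what one wants. I would therefore phrase the argument in terms of a single well-chosen linear functional $\phi$ on the root lattice: one that is positive on all simple roots (so $\phi(\gamma) \geq 0$), and for which $\phi(\xi)$ is, say, increasing along $\prec$ on radical roots — for instance $\phi = \mathrm{ht}$ or a weighted height. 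Applying $\phi$ to the identity and using $\phi(\xi) > \phi(\xi_i)$ for every $\xi > \xi_i$ that is $\succ\xi_i$ (and more care for those merely $>\xi_i$) should yield $\phi(\xi_i) - \phi(\xi_j) \geq c\,\phi(\xi_i) - c\,\phi(\xi_j) + \text{(strictly positive)}$ when some $c_{\xi'}\neq 0$, and since $\sum c_\xi = \sum c_\xi = c$ this collapses to a strict inequality $0 > 0$ unless all the relevant $c_\xi$ vanish. Pinning down exactly which functional makes every inequality go the right way — in particular handling right-hand terms $\xi$ with $\xi > \xi_i$ but $\xi \not\succ \xi_i$ — is the delicate point, and I expect the convexity hypothesis (not just partial-order compatibility) to be needed there.
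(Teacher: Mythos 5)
Your proposal stalls at precisely the point you flag, and the repairs you sketch do not close the gap. Any linear functional $\phi$ that is nonnegative on all simple roots is automatically nonnegative on every positive root, so in the identity the block $-\sum_{\xi<\xi_j} c_\xi\,\phi(\xi)$ enters with the \emph{wrong} sign and you have no way to bound it. The ``maximal $\eta$'' variant runs into the same wall: the negatively signed contributions $-c_\xi\xi$ with $\xi<\xi_j$ are perfectly capable of cancelling whatever coordinate of $\eta$ you try to single out, so maximality alone does not produce an unmatched simple root. Also, invoking $\Omega$ to obtain the second half is off target here: $\Omega$ is an anti-automorphism of $U_q(\mathfrak g)$, while \autoref{lem:eli-ord} is a purely root-lattice statement; the second case needs a parallel argument, not a formal dualization.

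What the paper does, and what is genuinely missing from your plan, is to exploit convexity via a Weyl group element rather than a positive linear functional. Rewrite the identity as
\[
(1-c_{\xi'})\,\xi' \;=\; (\xi'-\xi_i) + \xi_j + \sum_{\xi>\xi_i,\ \xi\neq\xi'} c_\xi\,\xi \;-\; \sum_{\xi<\xi_j} c_\xi\,\xi \;+\; \gamma,
\]
and apply the element $w$ of the Weyl group (see \cite[Section~1.7]{hum}) that sends to negative roots exactly those positive roots preceding $\xi_j$ in the convex order and keeps the rest positive. Then the formerly problematic terms $-c_\xi\,w(\xi)$ for $\xi<\xi_j$ become nonnegative; $w(\xi_j)$, $w(\gamma)$, and the terms with $\xi>\xi_i$ remain nonnegative; and $w(\xi'-\xi_i)$ is nonnegative because $\xi'-\xi_i$ is supported on the Levi simple roots, each of which occurs after every radical root in the ordering and is therefore kept positive by $w$. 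Hence the right-hand side is positive, forcing $c_{\xi'}=0$ since $c_{\xi'}\ge 0$. This sign reversal on an order ideal of the convex order is exactly what no functional that is globally nonnegative on simple roots can reproduce; that Weyl element is the essential missing idea in your proposal.
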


\begin{proof}
The condition $\xi^{\prime} \succ \xi_{i}$ implies $\xi^{\prime} > \xi_{i}$. Write the relevant equation in the form
\[
(1 - c_{\xi^{\prime}}) \xi^{\prime} = \xi^{\prime} - \xi_{i} + \xi_{j} + \sum_{\xi > \xi_{i}, \xi \neq \xi^{\prime}} c_{\xi} \xi - \sum_{\xi < \xi_{j}} c_{\xi} \xi + \gamma.
\]
It is possible to find an element of the Weyl group $w$ such that $w(\xi_{k}) < 0$ for $k < j$ and $w(\xi_{k}) > 0$ for $k \geq j$, see \cite[Section 1.7]{hum}. Since $\xi^{\prime} \succ \xi_{i}$ we have that $\xi^{\prime} - \xi_{i}$ is a non-negative combination of simple roots. If we apply $w$ we find that the RHS is positive. On the other hand the LHS is positive only for $c_{\xi^{\prime}} = 0$, which gives the conclusion.

The second case is similar. The condition $\xi^{\prime} \prec \xi_{j}$ implies $\xi^{\prime} < \xi_{j}$. Then write
\[
(-1+c_{\xi^{\prime}})\xi^{\prime}=-\xi_{i}+\xi_{j}-\xi^{\prime}-\sum_{\xi<\xi_{j},\xi\neq\xi^{\prime}}c_{\xi}\xi+\sum_{\xi>\xi_{i}}c_{\xi}\xi+\gamma.
\]
We can find $w$ such that $w(\xi_{k}) < 0$ for $k \leq i$ and $w(\xi_{k}) > 0$ for $k > i$. Since $\xi_{j} \succ \xi^{\prime}$ we have that $\xi_{j} - \xi^{\prime}$ is a non-negative combination of simple roots. If we apply $w$ we
find that the RHS is positive. On the other hand the LHS is positive only for $c_{\xi^{\prime}} = 0$.
\end{proof}

As a consequence, we only need to consider terms which are not comparable to $\xi_i$ and $\xi_j$ (with respect to the partial order). We define the two sets
\[
\mathcal{B}_i = \{ \xi > \xi_i : \xi \not \succ \xi_i \}, \quad
\mathcal{S}_j = \{ \xi < \xi_j : \xi \not \prec \xi_j \}.
\]
Then we can rewrite our original equality in the form
\[
\xi_{i} - \xi_{j} = \sum_{\xi \in \mathcal{B}_i} c_{\xi} \xi - \sum_{\xi \in \mathcal{S}_j} c_{\xi} \xi + \gamma.
\]

In the next lemma we show that, provided that we can find two positive roots $\xi_S$ and $\xi_B$ satisfying certain properties, we have a positive answer for \autoref{prob:equality}.

\begin{lemma}
\label{lem:eli-diff}
Let $i > j$. Suppose there exist two positive roots $\xi_S$ and $\xi_B$ such that $\xi_S \preceq \xi$ for all $\xi \in \mathcal{B}_{i}$
and $\xi_B \succeq \xi$ for all $\xi \in \mathcal{S}_{j}$.
Consider the differences
\[
\xi_{i} - \xi_{j}
= \sum_{k} a_{k} \alpha_{k}, \quad \xi_S - \xi_B = \sum_{k} b_{k} \alpha_{k}, \quad a_{k}, b_{k} \in \mathbb{Z}.
\]
Suppose there exists some index $m$ such that $a_{m} < b_{m}$. Then it follows that $c_{\xi} = 0$ for all $\xi \in \mathcal{B}_{i}$ and $\xi \in \mathcal{S}_{j}$. In this case \autoref{prob:equality} has a positive answer.
\end{lemma}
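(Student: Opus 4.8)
The plan is to expand the identity
\[
\xi_i - \xi_j = \sum_{\xi \in \mathcal{B}_i} c_\xi \xi - \sum_{\xi \in \mathcal{S}_j} c_\xi \xi + \gamma
\]
— the reduced form obtained just above by means of \autoref{lem:eli-ord} — in the basis of simple roots and to compare the two sides coefficient by coefficient, using throughout that each $c_\xi$ is a non-negative integer and that $\gamma$ is a non-negative combination of simple roots. Put $C_B = \sum_{\xi \in \mathcal{B}_i} c_\xi$ and $C_S = \sum_{\xi \in \mathcal{S}_j} c_\xi$; these are non-negative integers, and since all $c_\xi \geq 0$ it will be enough to show that $C_B = C_S = 0$. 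By the discussion preceding \autoref{prob:equality}, this is exactly what is needed to conclude that $[E_{\xi_i}, F_{\xi_j}] \in U_q(\mathfrak{l})$.

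The first step is to compare the coefficients of the distinguished simple root $\alpha_t$. This is where the cominuscule hypothesis is used essentially: every radical root contains $\alpha_t$ with multiplicity exactly $1$, while every non-radical root, and hence $\gamma$, contains it with multiplicity $0$. Reading off the $\alpha_t$-coefficients on both sides of the identity therefore yields $1 - 1 = C_B - C_S$, that is, $C_B = C_S =: C$.

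The second step is to compare the coefficients of the simple root $\alpha_m$ provided by the hypothesis. For $\xi \in \mathcal{B}_i$ the relation $\xi_S \preceq \xi$ makes the $\alpha_m$-coefficient of $\xi$ at least that of $\xi_S$; for $\xi \in \mathcal{S}_j$ the relation $\xi \preceq \xi_B$ makes it at most that of $\xi_B$; and the $\alpha_m$-coefficient of $\gamma$ is non-negative. Substituting these estimates into the identity, using $c_\xi \geq 0$, and then invoking $C_B = C_S = C$ gives
\[
a_m \;\geq\; C \,\big( [\xi_S]_m - [\xi_B]_m \big) \;=\; C\, b_m ,
\]
where $[\,\cdot\,]_m$ denotes the $\alpha_m$-coefficient, so that $[\xi_S]_m - [\xi_B]_m = b_m$. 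If $C \geq 1$, then (using $b_m \geq 0$) $a_m \geq C b_m \geq b_m$, which contradicts the hypothesis $a_m < b_m$; hence $C = 0$, and all the $c_\xi$ with $\xi \in \mathcal{B}_i \cup \mathcal{S}_j$ vanish. Combined with \autoref{lem:eli-ord}, which already yields $c_\xi = 0$ whenever $\xi \succ \xi_i$ or $\xi \prec \xi_j$, this shows $c_\xi = 0$ for all $\xi > \xi_i$ and all $\xi < \xi_j$, so \autoref{prob:equality} has a positive answer.

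There is no conceptual obstacle in this argument: it is linear algebra in the root lattice, and its single genuine idea is the use of the $\alpha_t$-grading to force $C_B = C_S$, which is the place where the cominuscule assumption really enters. The one point to treat with care is the final inequality $C b_m \geq b_m$ for $C \geq 1$, which needs $b_m \geq 0$; the value $b_m = 0$ causes no trouble, since then $a_m \geq C b_m = 0$ already contradicts $a_m < b_m = 0$, and the case $b_m < 0$ will not occur for the explicit choices of $\xi_S$ and $\xi_B$ made in the later sections.
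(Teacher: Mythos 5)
Your proof is correct and follows essentially the same approach as the paper: use the $\alpha_t$-grading (cominuscule hypothesis) to force $C_B = C_S = C$, then compare $\alpha_m$-coefficients to contradict $a_m < b_m$ when $C \geq 1$. You even explicitly flag the implicit requirement $b_m \geq 0$, which the paper's proof glosses over but which does hold for the specific $\xi_S, \xi_B$ chosen later.
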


\begin{proof}
Denote by $\alpha_t$ the simple root which defines the cominuscule parabolic. Recall that all radical roots contain $\alpha_t$ with multiplicity $1$, while non-radical roots do not contain it.
Then $\xi_i - \xi_j$ does not contain $\alpha_t$, which implies the equality
\[
C = \sum_{\xi \in \mathcal{B}_{i}} c_{\xi}
= \sum_{\xi \in \mathcal{S}_{j}} c_{\xi}.
\]
Using this fact we rewrite the identity in the form
\[
\xi_{i} - \xi_{j}
= C(\xi_S - \xi_B) + \sum_{\xi \in \mathcal{B}_{i}} c_{\xi} (\xi - \xi_S) + \sum_{\xi \in \mathcal{S}_{j}} c_{\xi} (\xi_B - \xi) + \gamma.
\]
Since $\xi_S \preceq \xi$ for any $\xi \in \mathcal{B}_{i}$ and $\xi_B \succeq \xi$ for any $\xi \in \mathcal{S}_{j}$, the terms $\xi - \xi_S$ and $\xi_B - \xi$ are non-negative.
Also recall that the term $\gamma$ is a non-negative linear combination of simple roots.
Now observe that on the LHS the root $\alpha_{m}$ appears with coefficient $a_{m}$, while on the RHS it appears with coefficient greater or
equal than $b_{m}$, provided that $C > 0$. Since by assumption we have $a_{m} < b_{m}$ we cannot have equality for any $C > 0$.
\end{proof}

To make use of this lemma we will have to study the various posets of radical roots.

\section{Posets of radical roots}
\label{sec:posets}

In this section we describe in detail the posets of radical roots corresponding to cominuscule parabolics.
The shapes of the posets can be found for example in \cite{schubert}.
For us it will also be important to know which simple roots connect the various radical roots.

\subsection{$A_r$ series}

The positive roots of $A_r$ are given by $\alpha_{i, j} = \sum_{k = i}^j \alpha_k$ with $1 \leq i \leq j \leq r$.
The poset of the positive roots is completely described by the relations
\[
\alpha_{i, j} - \alpha_{i + 1, j} = \alpha_i, \quad
\alpha_{i, j} - \alpha_{i, j - 1} = \alpha_j.
\]
The cominuscule parabolics are obtained by removing any node $\alpha_t$ with $1 \leq t \leq r$. An illustration of one of these posets is provided in \autoref{fig:poset-a}.

\begin{figure}[h]

\begin{center}
\begin{tikzpicture}[
scale = 0.9,
root/.style = {circle, draw=black, fill=blue!5, thick, inner sep=0pt, minimum size=7mm}
]

\node at (1, 1) [root] (a2) {$\alpha_{2,2}$};
\node at (2, 2) [root] (a3) {$\alpha_{2,3}$};
\node at (3, 3) [root] (a4) {$\alpha_{2,4}$};
\node at (4, 4) [root] (a5) {$\alpha_{2,5}$};

\node at (2, 0) [root] (a6) {$\alpha_{1,2}$};
\node at (3, 1) [root] (a7) {$\alpha_{1,3}$};
\node at (4, 2) [root] (a8) {$\alpha_{1,4}$};
\node at (5, 3) [root] (a9) {$\alpha_{1,5}$};

\draw (a2) -- (a3) node [red, midway, below, sloped] {$\alpha_{3}$};
\draw (a3) -- (a4) node [red, midway, below, sloped] {$\alpha_{4}$};
\draw (a4) -- (a5) node [red, midway, below, sloped] {$\alpha_{5}$};

\draw (a6) -- (a7) node [red, midway, below, sloped] {$\alpha_{3}$};
\draw (a7) -- (a8) node [red, midway, below, sloped] {$\alpha_{4}$};
\draw (a8) -- (a9) node [red, midway, below, sloped] {$\alpha_{5}$};

\draw (a2) -- (a6) node [red, midway, below, sloped] {$\alpha_{1}$};
\draw (a3) -- (a7) node [red, midway, below, sloped] {$\alpha_{1}$};
\draw (a4) -- (a8) node [red, midway, below, sloped] {$\alpha_{1}$};
\draw (a5) -- (a9) node [red, midway, below, sloped] {$\alpha_{1}$};

\end{tikzpicture}
\end{center}

\caption{Poset for the $A_r$ series with $r = 5$ and $t = 2$.}
\label{fig:poset-a}

\end{figure}
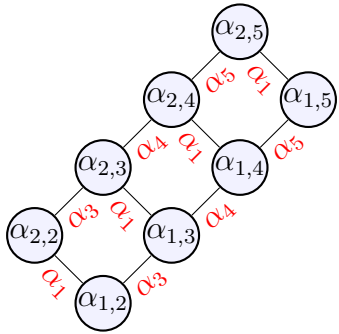

\subsection{$B_r$ series}

The positive roots of $B_r$ are given by
\[
\begin{aligned}
\alpha_{i, j} & = \sum_{k = i}^{j} \alpha_{k}, & 1 \leq i \leq j \leq r, \\
\alpha_{i, j}^{\prime} & = \sum_{k = i}^{j - 1}\alpha_{k} + \sum_{k = j}^{r} 2 \alpha_{k}, & 1 \leq i < j \leq r.
\end{aligned}
\]
The radical roots contain $\alpha_{1}$ with multiplicity $1$. These are
\[
\{\alpha_{1,1},\cdots,\alpha_{1,r}\},\ \{\alpha_{1,r}^{\prime},\cdots,\alpha_{1,2}^{\prime}\}.
\]
The poset is completely described by the following relations
\[
\alpha_{1,j}-\alpha_{1,j-1}=\alpha_{j}, \quad
\alpha_{1,j}^{\prime}-\alpha_{1,j+1}^{\prime}=\alpha_{j}, \quad
\alpha_{1,r}^{\prime}-\alpha_{1,r}=\alpha_{r}.
\]
An illustration of these posets is provided in \autoref{fig:poset-b}.

\begin{figure}[h]

\begin{center}
\begin{tikzpicture}[
scale = 0.9,
root/.style = {circle, draw=black, fill=blue!5, thick, inner sep=0pt, minimum size=7mm}
]

\node at (0, 0) [root] (a1) {$\alpha_{1,1}$};
\node at (1,-1) [root] (a2) {$\alpha_{1,2}$};
\node at (2,-2) [root] (a3) {$\alpha_{1,3}$};
\node at (3,-3) [root] (a4) {$\alpha^{\prime}_{1,3}$};
\node at (4,-4) [root] (a5) {$\alpha^{\prime}_{1,2}$};

\draw (a1) -- (a2) node [red, midway, below, sloped] {$\alpha_{2}$};
\draw (a2) -- (a3) node [red, midway, below, sloped] {$\alpha_{3}$};
\draw (a3) -- (a4) node [red, midway, below, sloped] {$\alpha_{3}$};
\draw (a4) -- (a5) node [red, midway, below, sloped] {$\alpha_{2}$};

\end{tikzpicture}
\end{center}

\caption{Poset for the $B_r$ series with $r = 3$.}
\label{fig:poset-b}

\end{figure}
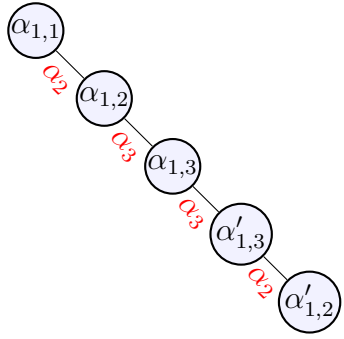

\subsection{$C_n$ series}

The positive roots of $C_r$ are given by
\[
\begin{aligned}
\alpha_{i, j} & = \sum_{k = i}^{j} \alpha_{k}, & 1 \leq i \leq j \leq r, \\
\alpha_{i, j}^{\prime} & = \sum_{k = i}^{j - 1}\alpha_{k} + \sum_{k = j}^{r - 1} 2 \alpha_{k} + \alpha_r, & 1 \leq i < j \leq r - 1.
\end{aligned}
\]
The radical roots contain $\alpha_{r}$ with multiplicity $1$. These are
\[
\{\alpha_{r,r},\cdots,\alpha_{1,r}\},\ \{\alpha_{r-1,r-1}^{\prime},\cdots,\alpha_{1,r-1}^{\prime}\},\ \cdots,\{\alpha_{2,2}^{\prime},\alpha_{1,2}^{\prime}\},\ \{\alpha_{1,1}^{\prime}\}.
\]
The poset is completely described by the following relations
\[
\alpha_{i, j}^{\prime} - \alpha_{i + 1, j}^{\prime} =\alpha_{i}, \quad
\alpha_{i, j}^{\prime} - \alpha_{i, j + 1}^{\prime} = \alpha_{j}, \quad
\alpha_{i, r - 1}^{\prime} - \alpha_{i, r} = \alpha_{r - 1}.
\]
An illustration of these posets is provided in \autoref{fig:poset-c}.

\begin{figure}[h]

\begin{center}
\begin{tikzpicture}[
scale = 0.9,
root/.style = {circle, draw=black, fill=blue!5, thick, inner sep=0pt, minimum size=7mm}
]

\node at (0, 0) [root] (a1) {$\alpha_{5,5}$};
\node at (1, 1) [root] (a2) {$\alpha_{4,5}$};
\node at (2, 2) [root] (a3) {$\alpha_{3,5}$};
\node at (3, 3) [root] (a4) {$\alpha_{2,5}$};
\node at (4, 4) [root] (a5) {$\alpha_{1,5}$};

\node at (2, 0) [root] (a6) {$\alpha^{\prime}_{4,4}$};
\node at (3, 1) [root] (a7) {$\alpha^{\prime}_{3,4}$};
\node at (4, 2) [root] (a8) {$\alpha^{\prime}_{2,4}$};
\node at (5, 3) [root] (a9) {$\alpha^{\prime}_{1,4}$};

\node at (4, 0) [root] (a10) {$\alpha^{\prime}_{3,3}$};
\node at (5, 1) [root] (a11) {$\alpha^{\prime}_{2,3}$};
\node at (6, 2) [root] (a12) {$\alpha^{\prime}_{1,3}$};

\node at (6, 0) [root] (a13) {$\alpha^{\prime}_{2,2}$};
\node at (7, 1) [root] (a14) {$\alpha^{\prime}_{1,2}$};

\node at (8, 0) [root] (a15) {$\alpha^{\prime}_{1,1}$};

\draw (a1) -- (a2) node [red, midway, below, sloped] {$\alpha_{4}$};
\draw (a2) -- (a3) node [red, midway, below, sloped] {$\alpha_{3}$};
\draw (a3) -- (a4) node [red, midway, below, sloped] {$\alpha_{2}$};
\draw (a4) -- (a5) node [red, midway, below, sloped] {$\alpha_{1}$};

\draw (a6) -- (a7) node [red, midway, below, sloped] {$\alpha_{3}$};
\draw (a7) -- (a8) node [red, midway, below, sloped] {$\alpha_{2}$};
\draw (a8) -- (a9) node [red, midway, below, sloped] {$\alpha_{1}$};

\draw (a10) -- (a11) node [red, midway, below, sloped] {$\alpha_{2}$};
\draw (a11) -- (a12) node [red, midway, below, sloped] {$\alpha_{1}$};

\draw (a13) -- (a14) node [red, midway, below, sloped] {$\alpha_{1}$};

\draw (a2) -- (a6) node [red, midway, below, sloped] {$\alpha_{4}$};
\draw (a3) -- (a7) node [red, midway, below, sloped] {$\alpha_{4}$};
\draw (a4) -- (a8) node [red, midway, below, sloped] {$\alpha_{4}$};
\draw (a5) -- (a9) node [red, midway, below, sloped] {$\alpha_{4}$};

\draw (a7) -- (a10) node [red, midway, below, sloped] {$\alpha_{3}$};
\draw (a8) -- (a11) node [red, midway, below, sloped] {$\alpha_{3}$};
\draw (a9) -- (a12) node [red, midway, below, sloped] {$\alpha_{3}$};

\draw (a11) -- (a13) node [red, midway, below, sloped] {$\alpha_{2}$};
\draw (a12) -- (a14) node [red, midway, below, sloped] {$\alpha_{2}$};

\draw (a14) -- (a15) node [red, midway, below, sloped] {$\alpha_{1}$};

\end{tikzpicture}
\end{center}

\caption{Poset for the $C_r$ series with $r = 5$.}
\label{fig:poset-c}

\end{figure}

\subsection{$D_n$ series (first case)}

The positive roots of $D_{r}$ are given by
\[
\begin{aligned}
\alpha_{i, j} & = \sum_{k = i}^{j} \alpha_{k}, & 1 \leq i \leq j \leq r - 1, \\
\alpha_{i, j}^{\prime} & = \sum_{k = i}^{j - 1}\alpha_{k} + \sum_{k = j}^{r - 2} 2 \alpha_{k} + \alpha_{r - 1} + \alpha_r, & 1 \leq i < j \leq r - 1, \\
\alpha_{i}^{\prime} & = \sum_{k = i}^{r - 2} \alpha_{k} + \alpha_{r}, & 1 \leq i \leq r - 1.
\end{aligned}
\]
The radical roots contain $\alpha_{1}$ with multiplicity $1$. These are
\[
\{\alpha_{1,1},\cdots\alpha_{1,r-1}\},\ \{\alpha_{1}^{\prime}\},\ \{\alpha_{1,r-1}^{\prime},\cdots,\alpha_{1,2}^{\prime}\}.
\]
The poset is completely described by the following relations
\[
\begin{gathered}
\alpha_{1, j} - \alpha_{1, j - 1} = \alpha_{j},\
\alpha_{1, j}^{\prime} - \alpha_{1, j + 1}^{\prime} = \alpha_{j},\\
\alpha_{1}^{\prime} - \alpha_{1, r - 2} =\alpha_{r},\
\alpha_{i, r - 1}^{\prime} - \alpha_{1, r - 1}=\alpha_{r},\
\alpha_{1, r - 1}^{\prime} - \alpha_{1}^{\prime} = \alpha_{r - 1}.
\end{gathered}
\]
An illustration of these posets is provided in \autoref{fig:poset-d1}.

\begin{figure}[h]

\begin{center}
\begin{tikzpicture}[
scale = 0.9,
root/.style = {circle, draw=black, fill=blue!5, thick, inner sep=0pt, minimum size=7mm}
]

\node at (0, 0) [root] (a1) {$\alpha_{1,1}$};
\node at (1, -1) [root] (a2) {$\alpha_{1,2}$};
\node at (2, -2) [root] (a3) {$\alpha_{1,3}$};
\node at (3, -3) [root] (a4) {$\alpha_{1,4}$};

\node at (3, -1) [root] (a5) {$\alpha^{\prime}_{1}$};

\node at (4, -2) [root] (a6) {$\alpha^{\prime}_{1,4}$};
\node at (5, -3) [root] (a7) {$\alpha^{\prime}_{1,3}$};
\node at (6, -4) [root] (a8) {$\alpha^{\prime}_{1,2}$};

\draw (a1) -- (a2) node [red, midway, below, sloped] {$\alpha_{2}$};
\draw (a2) -- (a3) node [red, midway, below, sloped] {$\alpha_{3}$};
\draw (a3) -- (a4) node [red, midway, below, sloped] {$\alpha_{4}$};

\draw (a5) -- (a6) node [red, midway, below, sloped] {$\alpha_{4}$};
\draw (a6) -- (a7) node [red, midway, below, sloped] {$\alpha_{3}$};
\draw (a7) -- (a8) node [red, midway, below, sloped] {$\alpha_{2}$};

\draw (a3) -- (a5) node [red, midway, below, sloped] {$\alpha_{5}$};
\draw (a4) -- (a6) node [red, midway, below, sloped] {$\alpha_{5}$};

\end{tikzpicture}
\end{center}

\caption{Poset for the $D_r$ series (first case) with $r = 5$.}
\label{fig:poset-d1}

\end{figure}
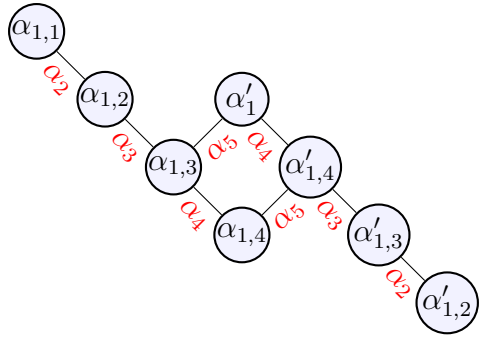

\subsection{$D_n$ series (second case)}

We keep the same notation as in the first case.
This time the radical roots contain $\alpha_{r}$ with multiplicity $1$. These are
\[
\{\alpha_{r-1}^{\prime},\cdots,\alpha_{1}^{\prime}\},\ \{\alpha_{r-2,r-1}^{\prime},\cdots,\alpha_{1,r-1}^{\prime}\},\ \cdots,\{\alpha_{2,3}^{\prime},\alpha_{1,3}^{\prime}\},\ \{\alpha_{1,2}^{\prime}\}.
\]
The poset is completely described by the following relations
\[
\alpha_{i,j}^{\prime}-\alpha_{i+1,j}^{\prime}=\alpha_{i}, \quad
\alpha_{i,j}^{\prime}-\alpha_{i,j+1}^{\prime}=\alpha_{j}, \quad
\alpha_{i,r-1}^{\prime}-\alpha_{i}^{\prime}=\alpha_{r-1}.
\]
An illustration of these posets is provided in \autoref{fig:poset-d2}.

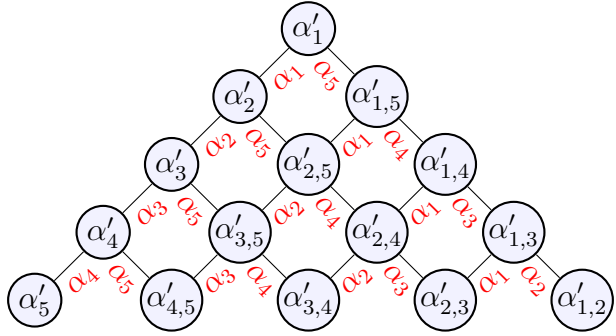
\begin{figure}[h]

\begin{center}
\begin{tikzpicture}[
scale = 0.9,
root/.style = {circle, draw=black, fill=blue!5, thick, inner sep=0pt, minimum size=7mm}
]

\node at (0, 0) [root] (a1) {$\alpha^{\prime}_{5}$};
\node at (1, 1) [root] (a2) {$\alpha^{\prime}_{4}$};
\node at (2, 2) [root] (a3) {$\alpha^{\prime}_{3}$};
\node at (3, 3) [root] (a4) {$\alpha^{\prime}_{2}$};
\node at (4, 4) [root] (a5) {$\alpha^{\prime}_{1}$};

\node at (2, 0) [root] (a6) {$\alpha^{\prime}_{4,5}$};
\node at (3, 1) [root] (a7) {$\alpha^{\prime}_{3,5}$};
\node at (4, 2) [root] (a8) {$\alpha^{\prime}_{2,5}$};
\node at (5, 3) [root] (a9) {$\alpha^{\prime}_{1,5}$};

\node at (4, 0) [root] (a10) {$\alpha^{\prime}_{3,4}$};
\node at (5, 1) [root] (a11) {$\alpha^{\prime}_{2,4}$};
\node at (6, 2) [root] (a12) {$\alpha^{\prime}_{1,4}$};

\node at (6, 0) [root] (a13) {$\alpha^{\prime}_{2,3}$};
\node at (7, 1) [root] (a14) {$\alpha^{\prime}_{1,3}$};

\node at (8, 0) [root] (a15) {$\alpha^{\prime}_{1,2}$};

\draw (a1) -- (a2) node [red, midway, below, sloped] {$\alpha_{4}$};
\draw (a2) -- (a3) node [red, midway, below, sloped] {$\alpha_{3}$};
\draw (a3) -- (a4) node [red, midway, below, sloped] {$\alpha_{2}$};
\draw (a4) -- (a5) node [red, midway, below, sloped] {$\alpha_{1}$};

\draw (a6) -- (a7) node [red, midway, below, sloped] {$\alpha_{3}$};
\draw (a7) -- (a8) node [red, midway, below, sloped] {$\alpha_{2}$};
\draw (a8) -- (a9) node [red, midway, below, sloped] {$\alpha_{1}$};

\draw (a10) -- (a11) node [red, midway, below, sloped] {$\alpha_{2}$};
\draw (a11) -- (a12) node [red, midway, below, sloped] {$\alpha_{1}$};

\draw (a13) -- (a14) node [red, midway, below, sloped] {$\alpha_{1}$};

\draw (a2) -- (a6) node [red, midway, below, sloped] {$\alpha_{5}$};
\draw (a3) -- (a7) node [red, midway, below, sloped] {$\alpha_{5}$};
\draw (a4) -- (a8) node [red, midway, below, sloped] {$\alpha_{5}$};
\draw (a5) -- (a9) node [red, midway, below, sloped] {$\alpha_{5}$};

\draw (a7) -- (a10) node [red, midway, below, sloped] {$\alpha_{4}$};
\draw (a8) -- (a11) node [red, midway, below, sloped] {$\alpha_{4}$};
\draw (a9) -- (a12) node [red, midway, below, sloped] {$\alpha_{4}$};

\draw (a11) -- (a13) node [red, midway, below, sloped] {$\alpha_{3}$};
\draw (a12) -- (a14) node [red, midway, below, sloped] {$\alpha_{3}$};

\draw (a14) -- (a15) node [red, midway, below, sloped] {$\alpha_{2}$};

\end{tikzpicture}
\end{center}

\caption{Poset for the $D_r$ series (second case) with $r = 6$.}
\label{fig:poset-d2}

\end{figure}

\subsection{$E_6$ case}

In this case we will only write the radical roots. These contain $\alpha_{5}$ with multiplicity $1$. We will use the notation $\alpha_{i, j} = \sum_{k = i}^j \alpha_k$ as in the other cases. We have
\[
\begin{gathered}
\xi_{1} = \alpha_{5,5},\
\xi_{2} = \alpha_{4,5},\
\xi_{3} = \alpha_{3,5},\
\xi_{4} = \alpha_{2,5},\
\xi_{5} = \alpha_{1,5},\
\xi_{6} = \alpha_{3,6},\
\xi_{7} = \alpha_{2,6},\
\xi_{8} = \alpha_{1,6}, \\
\xi_{9} = (0,1,2,1,1,1),\
\xi_{10} = (1,1,2,1,1,1),\
\xi_{11} = (1,2,2,1,1,1),\
\xi_{12} = (0,1,2,2,1,1),\\
\xi_{13} = (1,1,2,2,1,1),\
\xi_{14} = (1,2,2,2,1,1),\
\xi_{15} = (1,2,3,2,1,1),\
\xi_{16} = (1,2,3,2,1,2).
\end{gathered}
\]
The relations between these roots are depicted in the poset in \autoref{fig:poset-e6}.

\begin{figure}[h]

\begin{center}
\begin{tikzpicture}[
scale = 0.9,
root/.style = {circle, draw=black, fill=blue!5, thick, inner sep=0pt, minimum size=5.5mm}
]

\node at (0, 0) [root] (x1) {$\xi_{1}$};
\node at (1,-1) [root] (x2) {$\xi_{2}$};
\node at (2,-2) [root] (x3) {$\xi_{3}$};
\node at (3,-3) [root] (x4) {$\xi_{4}$};
\node at (4,-4) [root] (x5) {$\xi_{5}$};

\node at (3,-1) [root] (x6) {$\xi_{6}$};
\node at (4,-2) [root] (x7) {$\xi_{7}$};
\node at (5,-3) [root] (x8) {$\xi_{8}$};

\node at (5,-1) [root] (x9) {$\xi_{9}$};
\node at (6,-2) [root] (x10) {$\xi_{10}$};
\node at (7,-3) [root] (x13) {$\xi_{11}$};

\node at (6, 0) [root] (x11) {$\xi_{12}$};
\node at (7,-1) [root] (x12) {$\xi_{13}$};
\node at (8,-2) [root] (x14) {$\xi_{14}$};
\node at (9,-3) [root] (x15) {$\xi_{15}$};
\node at (10,-4) [root] (x16) {$\xi_{16}$};

\draw (x1) -- (x2) node [red, midway, below, sloped] {$\alpha_{4}$};
\draw (x2) -- (x3) node [red, midway, below, sloped] {$\alpha_{3}$};
\draw (x3) -- (x4) node [red, midway, below, sloped] {$\alpha_{2}$};
\draw (x4) -- (x5) node [red, midway, below, sloped] {$\alpha_{1}$};

\draw (x6) -- (x7) node [red, midway, below, sloped] {$\alpha_{2}$};
\draw (x7) -- (x8) node [red, midway, below, sloped] {$\alpha_{1}$};

\draw (x9) -- (x10) node [red, midway, below, sloped] {$\alpha_{1}$};
\draw (x10) -- (x13) node [red, midway, below, sloped] {$\alpha_{2}$};

\draw (x11) -- (x12) node [red, midway, below, sloped] {$\alpha_{1}$};
\draw (x12) -- (x14) node [red, midway, below, sloped] {$\alpha_{2}$};
\draw (x14) -- (x15) node [red, midway, below, sloped] {$\alpha_{3}$};
\draw (x15) -- (x16) node [red, midway, below, sloped] {$\alpha_{6}$};

\draw (x3) -- (x6) node [red, midway, above, sloped] {$\alpha_{6}$};
\draw (x4) -- (x7) node [red, midway, above, sloped] {$\alpha_{6}$};
\draw (x5) -- (x8) node [red, midway, above, sloped] {$\alpha_{6}$};

\draw (x7) -- (x9) node [red, midway, above, sloped] {$\alpha_{3}$};
\draw (x8) -- (x10) node [red, midway, above, sloped] {$\alpha_{3}$};

\draw (x9) -- (x11) node [red, midway, above, sloped] {$\alpha_{4}$};
\draw (x10) -- (x12) node [red, midway, above, sloped] {$\alpha_{4}$};
\draw (x13) -- (x14) node [red, midway, above, sloped] {$\alpha_{4}$};

\end{tikzpicture}
\end{center}

\caption{Poset for the $E_6$ case.}
\label{fig:poset-e6}

\end{figure}

\subsection{$E_7$ case}

We keep the same setup and notation as in the $E_6$ case. This time the radical roots contain $\alpha_{6}$ with multiplicity $1$. They are given by

\[
\begin{gathered}
\xi_{1} = \alpha_{6,6},\
\xi_{2} = \alpha_{5,6},\
\xi_{3} = \alpha_{4,6},\
\xi_{4} = \alpha_{3,6},\
\xi_{5} = \alpha_{2,6},\\
\xi_{6} = \alpha_{1,6},\
\xi_{7} = \alpha_{3,7},\
\xi_{8} = \alpha_{2,7},\
\xi_{9} = \alpha_{1,7},\\
\xi_{10} = (0,1,2,1,1,1,1),\
\xi_{11} = (1,1,2,1,1,1,1),\
\xi_{12} = (1,2,2,1,1,1,1),\\
\xi_{13} = (0,1,2,2,1,1,1),\
\xi_{14} = (1,1,2,2,1,1,1),\
\xi_{15} = (1,2,2,2,1,1,1),\\
\xi_{16} = (1,2,3,2,1,1,1),\
\xi_{17} = (1,2,3,2,1,1,2),\
\xi_{18} = (0,1,2,2,2,1,1),\\
\xi_{19} = (1,1,2,2,2,1,1),\
\xi_{20} = (1,2,2,2,2,1,1),\
\xi_{21} = (1,2,3,2,2,1,1),\\
\xi_{22} = (1,2,3,2,2,1,2),\
\xi_{23} = (1,2,3,3,2,1,1),\
\xi_{24} = (1,2,3,3,2,1,2),\\
\xi_{25} = (1,2,4,3,2,1,2),\
\xi_{26} = (1,3,4,3,2,1,2),\
\xi_{27} = (2,3,4,3,2,1,2).
\end{gathered}
\]
The relations between these roots are depicted in the poset in \autoref{fig:poset-e6}.

\begin{figure}[h]

\begin{center}
\begin{tikzpicture}[
scale = 0.9,
root/.style = {circle, draw=black, fill=blue!5, thick, inner sep=0pt, minimum size=5.5mm}
]

\node at (0, 0) [root] (x1) {$\xi_{1}$};
\node at (1,-1) [root] (x2) {$\xi_{2}$};
\node at (2,-2) [root] (x3) {$\xi_{3}$};
\node at (3,-3) [root] (x4) {$\xi_{4}$};
\node at (4,-4) [root] (x5) {$\xi_{5}$};
\node at (5,-5) [root] (x6) {$\xi_{6}$};

\node at (4,-2) [root] (x7) {$\xi_{7}$};
\node at (5,-3) [root] (x8) {$\xi_{8}$};
\node at (6,-4) [root] (x9) {$\xi_{9}$};

\node at (6,-2) [root] (x10) {$\xi_{10}$};
\node at (7,-3) [root] (x11) {$\xi_{11}$};
\node at (8,-4) [root] (x12) {$\xi_{12}$};

\node at (7,-1) [root] (x13) {$\xi_{13}$};
\node at (8,-2) [root] (x14) {$\xi_{14}$};
\node at (9,-3) [root] (x15) {$\xi_{15}$};
\node at (10,-4) [root] (x16) {$\xi_{16}$};
\node at (11,-5) [root] (x17) {$\xi_{17}$};

\node at (8,0) [root] (x18) {$\xi_{18}$};
\node at (9,-1) [root] (x19) {$\xi_{19}$};
\node at (10,-2) [root] (x20) {$\xi_{20}$};
\node at (11,-3) [root] (x21) {$\xi_{21}$};
\node at (12,-4) [root] (x22) {$\xi_{22}$};

\node at (12,-2) [root] (x23) {$\xi_{23}$};
\node at (13,-3) [root] (x24) {$\xi_{24}$};

\node at (14,-2) [root] (x25) {$\xi_{25}$};
\node at (15,-1) [root] (x26) {$\xi_{26}$};
\node at (16, 0) [root] (x27) {$\xi_{27}$};

\draw (x1) -- (x2) node [red, midway, below, sloped] {$\alpha_{5}$};
\draw (x2) -- (x3) node [red, midway, below, sloped] {$\alpha_{4}$};
\draw (x3) -- (x4) node [red, midway, below, sloped] {$\alpha_{3}$};
\draw (x4) -- (x5) node [red, midway, below, sloped] {$\alpha_{2}$};
\draw (x5) -- (x6) node [red, midway, below, sloped] {$\alpha_{1}$};

\draw (x7) -- (x8) node [red, midway, below, sloped] {$\alpha_{2}$};
\draw (x8) -- (x9) node [red, midway, below, sloped] {$\alpha_{1}$};

\draw (x10) -- (x11) node [red, midway, below, sloped] {$\alpha_{1}$};
\draw (x11) -- (x12) node [red, midway, below, sloped] {$\alpha_{2}$};

\draw (x13) -- (x14) node [red, midway, below, sloped] {$\alpha_{1}$};
\draw (x14) -- (x15) node [red, midway, below, sloped] {$\alpha_{2}$};
\draw (x15) -- (x16) node [red, midway, below, sloped] {$\alpha_{3}$};
\draw (x16) -- (x17) node [red, midway, below, sloped] {$\alpha_{7}$};

\draw (x18) -- (x19) node [red, midway, below, sloped] {$\alpha_{1}$};
\draw (x19) -- (x20) node [red, midway, below, sloped] {$\alpha_{2}$};
\draw (x20) -- (x21) node [red, midway, below, sloped] {$\alpha_{3}$};
\draw (x21) -- (x22) node [red, midway, below, sloped] {$\alpha_{7}$};

\draw (x23) -- (x24) node [red, midway, below, sloped] {$\alpha_{7}$};

\draw (x4) -- (x7) node [red, midway, below, sloped] {$\alpha_{7}$};
\draw (x5) -- (x8) node [red, midway, below, sloped] {$\alpha_{7}$};
\draw (x6) -- (x9) node [red, midway, below, sloped] {$\alpha_{7}$};

\draw (x8) -- (x10) node [red, midway, below, sloped] {$\alpha_{3}$};
\draw (x9) -- (x11) node [red, midway, below, sloped] {$\alpha_{3}$};

\draw (x10) -- (x13) node [red, midway, below, sloped] {$\alpha_{4}$};
\draw (x11) -- (x14) node [red, midway, below, sloped] {$\alpha_{4}$};
\draw (x12) -- (x15) node [red, midway, below, sloped] {$\alpha_{4}$};

\draw (x13) -- (x18) node [red, midway, below, sloped] {$\alpha_{5}$};
\draw (x14) -- (x19) node [red, midway, below, sloped] {$\alpha_{5}$};
\draw (x15) -- (x20) node [red, midway, below, sloped] {$\alpha_{5}$};
\draw (x16) -- (x21) node [red, midway, below, sloped] {$\alpha_{5}$};
\draw (x17) -- (x22) node [red, midway, below, sloped] {$\alpha_{5}$};

\draw (x21) -- (x23) node [red, midway, below, sloped] {$\alpha_{4}$};
\draw (x22) -- (x24) node [red, midway, below, sloped] {$\alpha_{4}$};

\draw (x24) -- (x25) node [red, midway, below, sloped] {$\alpha_{3}$};
\draw (x25) -- (x26) node [red, midway, below, sloped] {$\alpha_{2}$};
\draw (x26) -- (x27) node [red, midway, below, sloped] {$\alpha_{1}$};

\end{tikzpicture}
\end{center}

\caption{Poset for the $E_7$ case.}
\label{fig:poset-e7}

\end{figure}
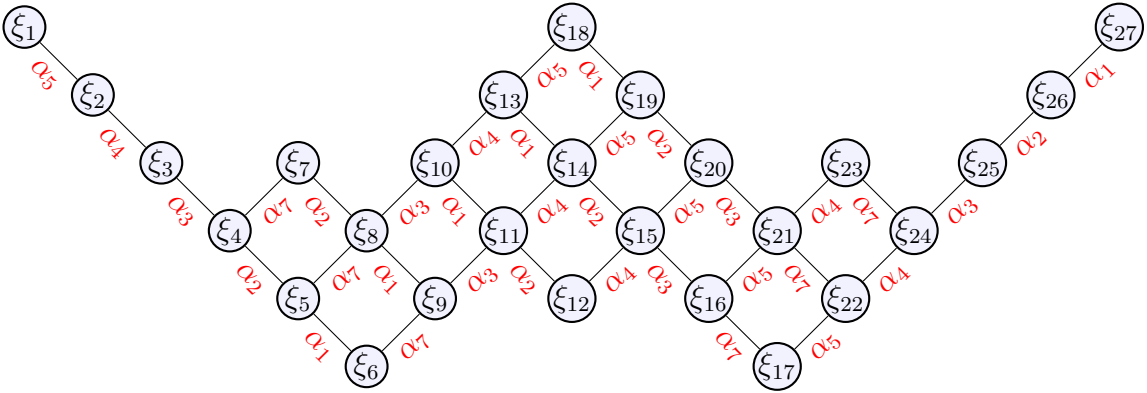

\section{Enumeration of the posets}
\label{sec:enumeration}

In this section we will give a uniform construction of the posets of radical roots. This setup will allow us to easily check certain properties that they satisfy.

\subsection{Construction}

The construction is as follows. Start with a box of size $A\times B$ and vertices $\{v_{i, j}\}$ with $1 \leq i \leq A$ and $1 \leq j \leq B$ and edges connecting $v_{i, j}$ to $v_{i + 1, j}$ and $v_{i, j + 1}$. We require that the label of the edge connecting $v_{i, j}$ to $v_{i + 1,j}$ depends only on $i$ and we denote it by $a_{i}$.
Similarly the label of the edge connecting $v_{i, j}$ to $v_{i, j + 1}$ depends only on $j$ and we denote it by $d_{j}$.
Here the letters $a_i$ and $d_j$ mean anti-diagonal and diagonal, respectively.
An illustration of this setup is given in \autoref{fig:construction}.
From this starting configuration we are allowed to remove some vertices and the corresponding edges.
Comparison with our previous explicit description shows that all the posets of radical roots can be realized in this way (for certain choices of vertices to be removed).
Of course not all graphs obtained in this way correspond to posets of radical roots.
This will not be important in the following.

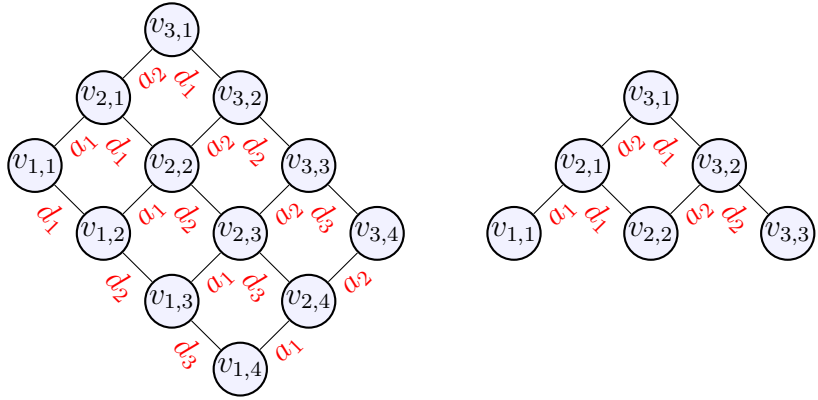
\begin{figure}[h]
\begin{center}
\begin{tikzpicture}[
scale = 0.9,
root/.style = {circle, draw=black, fill=blue!5, thick, inner sep=0pt, minimum size=7mm}
]

\node at (0,  0) [root] (v11) {$v_{1,1}$};
\node at (1, -1) [root] (v12) {$v_{1,2}$};
\node at (2, -2) [root] (v13) {$v_{1,3}$};
\node at (3, -3) [root] (v14) {$v_{1,4}$};

\node at (1,  1) [root] (v21) {$v_{2,1}$};
\node at (2,  0) [root] (v22) {$v_{2,2}$};
\node at (3, -1) [root] (v23) {$v_{2,3}$};
\node at (4, -2) [root] (v24) {$v_{2,4}$};

\node at (2,  2) [root] (v31) {$v_{3,1}$};
\node at (3,  1) [root] (v32) {$v_{3,2}$};
\node at (4,  0) [root] (v33) {$v_{3,3}$};
\node at (5, -1) [root] (v34) {$v_{3,4}$};

\draw (v11) -- (v12) node [red, midway, below, sloped] {$d_1$};
\draw (v12) -- (v13) node [red, midway, below, sloped] {$d_2$};
\draw (v13) -- (v14) node [red, midway, below, sloped] {$d_3$};

\draw (v21) -- (v22) node [red, midway, below, sloped] {$d_1$};
\draw (v22) -- (v23) node [red, midway, below, sloped] {$d_2$};
\draw (v23) -- (v24) node [red, midway, below, sloped] {$d_3$};

\draw (v31) -- (v32) node [red, midway, below, sloped] {$d_1$};
\draw (v32) -- (v33) node [red, midway, below, sloped] {$d_2$};
\draw (v33) -- (v34) node [red, midway, below, sloped] {$d_3$};

\draw (v11) -- (v21) node [red, midway, below, sloped] {$a_1$};
\draw (v21) -- (v31) node [red, midway, below, sloped] {$a_2$};

\draw (v12) -- (v22) node [red, midway, below, sloped] {$a_1$};
\draw (v22) -- (v32) node [red, midway, below, sloped] {$a_2$};

\draw (v13) -- (v23) node [red, midway, below, sloped] {$a_1$};
\draw (v23) -- (v33) node [red, midway, below, sloped] {$a_2$};

\draw (v14) -- (v24) node [red, midway, below, sloped] {$a_1$};
\draw (v24) -- (v34) node [red, midway, below, sloped] {$a_2$};


\node at (7, -1) [root] (x11) {$v_{1,1}$};

\node at (8,  0) [root] (x21) {$v_{2,1}$};
\node at (9, -1) [root] (x22) {$v_{2,2}$};

\node at (9,  1) [root] (x31) {$v_{3,1}$};
\node at (10, 0) [root] (x32) {$v_{3,2}$};
\node at (11,-1) [root] (x33) {$v_{3,3}$};

\draw (x21) -- (x22) node [red, midway, below, sloped] {$d_1$};

\draw (x31) -- (x32) node [red, midway, below, sloped] {$d_1$};
\draw (x32) -- (x33) node [red, midway, below, sloped] {$d_2$};

\draw (x11) -- (x21) node [red, midway, below, sloped] {$a_1$};
\draw (x21) -- (x31) node [red, midway, below, sloped] {$a_2$};

\draw (x22) -- (x32) node [red, midway, below, sloped] {$a_2$};

\end{tikzpicture}
\end{center}

\caption{Left: a starting box of size $3 \times 4$. Right: one of the posets realized in this way (corresponding for example to the $C_r$ series).}
\label{fig:construction}

\end{figure}

The poset structure is obtained in the following way. We assigning to each vertex $v_{i, j}$ a radical root $\xi(v_{i, j})$ and to the labels $a_{i}$ and $d_{j}$ the simple roots such that
\[
\xi(v_{i + 1, j}) - \xi(v_{i, j}) = a_{i},\quad
\xi(v_{i,j + 1}) - \xi(v_{i, j}) = d_{j}.
\]
Observe that in this construction the radical root $\alpha_{t}$ corresponds to the vertex $v_{1, 1}$, while the highest root (which is always a radical root) corresponds to the vertex
$v_{A, B}$.

This specific enumeration of the vertices gives a simple way to tell when two radical roots are comparable: indeed, we have that $\xi(v_{a, b}) \preceq \xi(v_{c, d})$ if and only if $a \leq c$ and $b \leq d$.
In all other cases the two radical roots are not comparable.
This claim easily follows from the rules given previously defining the labels $\{a_i\}$ and $\{d_j\}$.

\begin{notation}
Let $F(i)$ be the number such that $v_{i, F(i)}$ is the \emph{first} element along the diagonal $i$. Similarly let $L(i)$ be the number such that $v_{i, L(i)}$ is the \emph{last} element along the diagonal $i$.
\end{notation}

These functions are needed to express some further properties of the posets in this realization. They can be obtained by inspection of the corresponding posets or by general arguments related to root systems.
The properties that we need are the following.

\begin{itemize}
\item For every diagonal $i$ we have all the vertices $\{v_{i, j}\}$ with $F(i) \leq j \leq L(i)$.
\item We have $F(i) \leq F(k)$ for $i < k$. Otherwise there would be a positive root which could not be obtained from a root of lower height by adding a simple root.
\item We have $L(i) \leq L(k)$ for $i < k$. Otherwise there would be a positive root which could not be obtained by subtracting simple roots from the highest root.
\end{itemize}

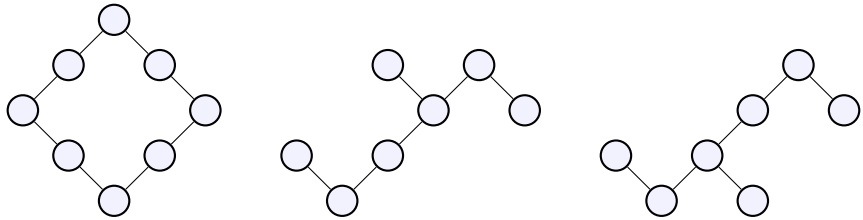
\begin{figure}[h]

\begin{center}
\begin{tikzpicture}[
scale = 0.6,
root/.style = {circle, draw=black, fill=blue!5, thick, inner sep=0pt, minimum size=4mm}
]

\node at ( 0,  0) [root] (a1) {$$};
\node at ( 1,  1) [root] (a2) {$$};
\node at ( 2,  2) [root] (a3) {$$};
\node at ( 3,  1) [root] (a4) {$$};
\node at ( 4,  0) [root] (a5) {$$};
\node at ( 3, -1) [root] (a6) {$$};
\node at ( 2, -2) [root] (a7) {$$};
\node at ( 1, -1) [root] (a8) {$$};

\draw (a1) -- (a2) node [red, midway, below, sloped] {$$};
\draw (a2) -- (a3) node [red, midway, below, sloped] {$$};
\draw (a3) -- (a4) node [red, midway, below, sloped] {$$};
\draw (a4) -- (a5) node [red, midway, below, sloped] {$$};
\draw (a5) -- (a6) node [red, midway, below, sloped] {$$};
\draw (a6) -- (a7) node [red, midway, below, sloped] {$$};
\draw (a7) -- (a8) node [red, midway, below, sloped] {$$};
\draw (a8) -- (a1) node [red, midway, below, sloped] {$$};

\node at ( 6, -1) [root] (b1) {$$};
\node at ( 7, -2) [root] (b2) {$$};
\node at ( 8, -1) [root] (b3) {$$};
\node at ( 9,  0) [root] (b4) {$$};
\node at ( 8,  1) [root] (b5) {$$};
\node at (10,  1) [root] (b6) {$$};
\node at (11,  0) [root] (b7) {$$};

\draw (b1) -- (b2) node [red, midway, below, sloped] {$$};
\draw (b2) -- (b3) node [red, midway, below, sloped] {$$};
\draw (b3) -- (b4) node [red, midway, below, sloped] {$$};
\draw (b4) -- (b5) node [red, midway, below, sloped] {$$};
\draw (b4) -- (b6) node [red, midway, below, sloped] {$$};
\draw (b6) -- (b7) node [red, midway, below, sloped] {$$};

\node at (13, -1) [root] (c1) {$$};
\node at (14, -2) [root] (c2) {$$};
\node at (15, -1) [root] (c3) {$$};
\node at (16, -2) [root] (c4) {$$};
\node at (16,  0) [root] (c5) {$$};
\node at (17,  1) [root] (c6) {$$};
\node at (18,  0) [root] (c7) {$$};

\draw (c1) -- (c2) node [red, midway, below, sloped] {$$};
\draw (c2) -- (c3) node [red, midway, below, sloped] {$$};
\draw (c3) -- (c4) node [red, midway, below, sloped] {$$};
\draw (c3) -- (c5) node [red, midway, below, sloped] {$$};
\draw (c5) -- (c6) node [red, midway, below, sloped] {$$};
\draw (c6) -- (c7) node [red, midway, below, sloped] {$$};

\end{tikzpicture}
\end{center}

\caption{Illustration of the forbidden configurations.}

\end{figure}

\subsection{Another property of the posets}

In this subsection we describe one less obvious property satisfied by the posets of radical roots.
Although it is not apparent at this stage why we would need such a property, it will become clear later on.

Recall that along the diagonal $i$ we have the vertices $\{v_{i, F(i)}, \cdots, v_{i, L(i)}\}$.
These are connected by the labels $\{d_{F(i)}, \cdots, d_{L(i) - 1}\}$ via the relations $\xi(v_{i, j + 1}) - \xi(v_{i, j}) = d_{j}$. Also recall that for any $i$ we have $L(i) \leq L(i + 1)$. We define the "line" $L_i$ by
\[
L_{i} = \{d_{F(i + 1)}, \cdots, d_{L(i) - 1}\}.
\]
It is a subset of all the labels which occur along the diagonal $i + 1$ (not the diagonal $i$, as the notation might suggest). This definition is better understood by looking at \autoref{fig:lines}.

\begin{figure}[h]

\begin{center}
\begin{tikzpicture}[
scale = 0.6,
root/.style = {circle, draw=black, fill=blue!5, thick, inner sep=0pt, minimum size = 4mm},
redroot/.style = {circle, draw=black, fill = red!40, thick, inner sep=0pt, minimum size = 4mm},
greenroot/.style = {circle, draw = black, fill = green!50, thick, inner sep=0pt, minimum size = 4mm}
]

\node at (0, 0) [root] (a1) {};
\node at (1, 1) [root] (a2) {};
\node at (2, 2) [root] (a3) {};
\node at (3, 3) [greenroot] (a4) {};

\node at (2, 0) [root] (a6) {};
\node at (3, 1) [root] (a7) {};
\node at (4, 2) [greenroot] (a8) {};

\node at (4, 0) [redroot] (a10) {};
\node at (5, 1) [greenroot] (a11) {};

\node at (6, 0) [root] (a13) {};

\draw (a1) -- (a2) node [red, midway, below, sloped] {};
\draw (a2) -- (a3) node [red, midway, below, sloped] {};
\draw (a3) -- (a4) node [red, midway, below, sloped] {};

\draw (a6) -- (a7) node [red, midway, below, sloped] {};
\draw (a7) -- (a8) node [red, midway, below, sloped] {};

\draw (a10) -- (a11) node [red, midway, below, sloped] {};

\draw (a2) -- (a6) node [red, midway, below, sloped] {};
\draw (a3) -- (a7) node [red, midway, below, sloped] {};
\draw (a4) -- (a8) node [red, midway, below, sloped] {};

\draw (a7) -- (a10) node [red, midway, below, sloped] {};
\draw (a8) -- (a11) node [red, midway, below, sloped] {};

\draw (a11) -- (a13) node [red, midway, below, sloped] {};

\end{tikzpicture}
\end{center}

\caption{In red the vertex $v_{i, L(i)}$, in green the vertices connected by $L_i$.}
\label{fig:lines}

\end{figure}
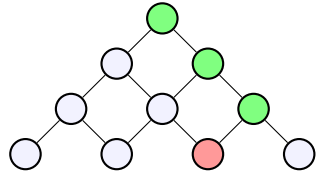

It is not difficult to see that the label $a_i$ can \emph{not} appear in $L_i$. If this were the case, we would have a root on the diagonal $i + 1$ identical to a root on the diagonal $i$, which is impossible.

The question is then whether the label $a_i$ can appear in the line $L_j$ for $i \neq j$.
To answer this question we write down explicitely, in the following list, the values of $a_i$ in increasing order and the corresponding sets $L_i$ for all the posets.

\begin{itemize}

\item
For the $A_r$ series the values of $a_i$ are given by $\{ \alpha_{t + 1}, \cdots, \alpha_r \}$. For all of them the corresponding set is $\{ \alpha_{1}, \cdots, \alpha_t \}$.

\item For the $B_r$ series we only have one diagonal, so there is nothing to check.

\item
For the $C_r$ series the values of $a_i$ are given by $\{ \alpha_{r - 1}, \cdots, \alpha_1 \}$.
Corresponding to $\alpha_k$ we have the set $\{\alpha_{k + 1}, \cdots, \alpha_{r - 1}\}$, which is empty for $\alpha_{r - 1}$.

\item
For the $D_r$ series (first case) we only have two diagonals, so there is nothing to check.

\item
For the $D_r$ series (second case) the values of $a_i$ are given by $\{ \alpha_{r - 2}, \cdots, \alpha_1 \}$.
Corresponding to $\alpha_k$ we have the set $\{\alpha_{k + 2}, \cdots, \alpha_{r - 1}\}$, which is empty for $\alpha_{r - 2}$.

\item The $E_6$ and $E_7$ cases are irregular. We list them in the tables below, with $E_6$ to the left and $E_7$ to the right. We also skip values $a_i$ such that $L_i$ is empty.

\begin{center}
\begin{tabular}{|c|c|}
\hline 
$a_{i}$ & $L_{i}$
\tabularnewline
\hline 
\hline 
$\alpha_{6}$ & $\{\alpha_{1},\alpha_{2}\}$
\tabularnewline
\hline 
$\alpha_{3}$ & $\{\alpha_{1}\}$
\tabularnewline
\hline 
$\alpha_{4}$ & $\{\alpha_{1},\alpha_{2}\}$
\tabularnewline
\hline 
\end{tabular}
\qquad
\begin{tabular}{|c|c|}
\hline 
$a_{i}$ & $L_{i}$
\tabularnewline
\hline 
\hline 
$\alpha_{7}$ & $\{\alpha_{1}, \alpha_{2}\}$
\tabularnewline
\hline 
$\alpha_{3}$ & $\{\alpha_{1}\}$
\tabularnewline
\hline 
$\alpha_{4}$ & $\{\alpha_{1}, \alpha_{2}\}$
\tabularnewline
\hline 
$\alpha_{5}$ & $\{\alpha_{1}, \alpha_{2},\alpha_{3}, \alpha_{7}\}$
\tabularnewline
\hline 
$\alpha_{4}$ & $\{\alpha_{7}\}$
\tabularnewline
\hline 
\end{tabular}
\end{center}

\end{itemize}

\smallskip

\begin{lemma}
\label{lem:lines}
Suppose that $a_i \in L_j$ for some $i$ and $j$. Then we must have $i < j$.
\end{lemma}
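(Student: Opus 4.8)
The plan is to verify the claim by going through the explicit list of pairs $(a_i, L_i)$ compiled just above the statement, case by case along the classification of cominuscule parabolics, and checking in each case that whenever $a_i$ occurs in some line $L_j$, the diagonal index $i$ is strictly smaller than $j$. So the proof is essentially a finite inspection, organized by the structure already laid out.

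First I would recall the setup: the diagonals are indexed so that $F(i) \le F(k)$ and $L(i) \le L(k)$ for $i < k$, and the label $a_i$ connects the end of diagonal $i$ to the start of diagonal $i+1$; the line $L_j = \{d_{F(j+1)}, \dots, d_{L(j)-1}\}$ collects the labels $d_k$ appearing along diagonal $j+1$ in the range that overlaps with diagonal $j$. The key structural point I would isolate is this: the labels $a_i$ and $d_j$ are simple roots, and the simple root appearing as $a_i$ is determined by which step of the root system is taken in passing from diagonal $i$ to diagonal $i+1$; similarly $d_j$ records a horizontal step. The assertion $a_i \in L_j$ means the vertical simple root used between diagonals $i,i+1$ also appears as a horizontal step somewhere on diagonal $j+1$. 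I would then argue that this forces $j > i$, because if $j \le i$ the relevant horizontal step on diagonal $j+1 \le i+1$ would have to occur at a ``height'' strictly below where $a_i$ is first available, contradicting the monotonicity $F(i) \le F(k)$, $L(i) \le L(k)$ — or, more concretely, I would simply read it off the tables.

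The cleanest route, and the one I expect the paper takes, is: for the $B_r$ and first-case $D_r$ posets there is nothing to check (only one or two diagonals, $L_i$ empty in the relevant range); for the $A_r$, $C_r$, and second-case $D_r$ series the values of $a_i$ listed in increasing order are $\{\alpha_{t+1},\dots,\alpha_r\}$, $\{\alpha_{r-1},\dots,\alpha_1\}$, $\{\alpha_{r-2},\dots,\alpha_1\}$ respectively, and for each such $a_i$ the set $L_i$ is explicitly $\{\alpha_1,\dots,\alpha_t\}$, $\{\alpha_{k+1},\dots,\alpha_{r-1}\}$, $\{\alpha_{k+2},\dots,\alpha_{r-1}\}$; in all these one checks directly that $a_i = \alpha_k$ belongs to $L_j$ only for indices $j$ corresponding to a later diagonal, i.e. $i < j$ (for $A_r$ the $a_i$ never appear in any $L_j$ at all; for $C_r$ and $D_r$ one notes that $\alpha_k \in L_j$ forces the parameter of the $j$-th diagonal to be smaller than $k$, hence $j > i$ once one accounts for the decreasing labeling of the $a_i$). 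Finally for $E_6$ and $E_7$ I would just inspect the two tables given above: reading them top to bottom (which is the order of increasing diagonal index $i$), one sees that each repeated label $\alpha_k$ that appears in a later $L_j$ indeed sits in a row below the row in which it first appears as an $a_i$, so $i < j$ in every case.

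The main obstacle is conceptual rather than computational: making the monotonicity argument airtight without appealing to the explicit lists, i.e. proving directly from the three bulleted properties of $F$ and $L$ that $a_i \in L_j$ is impossible for $j \le i$. The subtle point is the case $j = i$, which is exactly the observation already recorded before the lemma ($a_i \notin L_i$, since otherwise diagonal $i+1$ would contain a root equal to one on diagonal $i$); for $j < i$ one has to combine this with the fact that the simple root $a_i$, once it appears as a vertical label at some position, cannot reappear as a horizontal label on a strictly earlier diagonal without creating a coincidence of roots or violating $F(i)\le F(k)$. If that uniform argument proves delicate, the safe fallback — and probably what is intended given that the posets have already been tabulated — is the finite case check described above, which is routine.
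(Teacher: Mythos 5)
Your approach matches the paper's almost exactly: the proof in the paper is a single line, ``This is easily checked using the list given above,'' and you also propose a finite inspection of that list, organized by type. So at the level of \emph{method} you are on the paper's wavelength.

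However, there is a genuine gap, and it is worth naming because it is inherited from the paper itself. The tables for $E_6$ and $E_7$ explicitly omit the indices $i$ for which $L_i=\emptyset$, whereas the lemma quantifies over \emph{all} $i$, and the application in the proof of the main proposition invokes it for arbitrary diagonal indices. Your inspection, as you phrase it (``each repeated label $\alpha_k$ that appears in a later $L_j$ indeed sits in a row below the row in which it \emph{first} appears as an $a_i$''), only checks the first occurrence of each simple root as an $a_i$ and only scans the rows actually printed. But for $E_7$ one has $a_6 = \alpha_3$ (this index is omitted from the table since $L_6 = \emptyset$), while $\alpha_3 \in L_4 = \{\alpha_1,\alpha_2,\alpha_3,\alpha_7\}$; so $a_i \in L_j$ with $i = 6 > 4 = j$, contradicting the lemma as stated. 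The same happens for $a_7 = \alpha_2 \in L_1$ and $a_8 = \alpha_1 \in L_1$. In other words, the lemma is actually false in the generality in which it is written, and a table inspection that stops at the visible rows cannot establish it.

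The phenomenon is not fatal for the paper's main theorem: in each offending case $L_a = \emptyset$, so $a_{c-1} \notin L_a$ trivially and the simple root $a_{c-1}$ (rather than $a_a$) has strictly positive coefficient in $(\xi_S - \chi_B) + (\chi_S - \xi_B)$, which is all that is needed to apply the elimination lemma. But this requires restating the lemma (e.g.\ restricting to $i$ with $L_i \neq \emptyset$, or asserting instead that one cannot simultaneously have $a_a \in L_{c-1}$ and $a_{c-1} \in L_a$ when $a \geq c$) and adjusting the step in the main proof that says ``the simple root $a_a$ appears with positive coefficient.'' Your proposal, by mirroring the paper's tabular check without accounting for the suppressed rows, reproduces the gap rather than closing it.
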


\begin{proof}
This is easily checked using the list given above.
\end{proof}

\begin{remark}
Notice that \autoref{lem:lines} is not true if, instead of $L_i$, we consider all the labels along the diagonal $i + 1$. This can be seen for example in the poset of the $C_r$ series.
\end{remark}

\begin{remark}
For the cases $A_r$, $C_r$ and $D_r$ it is possible to give a more conceptual proof of \autoref{lem:lines} using the fact that $F(i) = 1$ for all $i$.
However we were not able to find such a proof for the irregular cases $E_6$ and $E_7$, hence the explicit description.
\end{remark}

\section{The main proof}
\label{sec:proof}

Recall that with our enumeration of the vertices we have $\xi(v_{a, b}) \preceq \xi(v_{c, d})$ if and only if $a \leq c$ and $b \leq d$.
All the other cases are not comparable.
We now fix a specific total order which is compatible with this partial order, as discussed in \autoref{sec:convex-ord}.

\begin{definition}
We define the total order $<$ by $\xi(v_{a, b}) < \xi(v_{c, d})$ if $a < c$ or $a = c$ and $b < d$.
\end{definition}

In \autoref{fig:poset-e6} and \autoref{fig:poset-e7} one can see how this total order looks graphically, for the posets of $E_6$ and $E_7$.
In the next lemma we associate two radical roots $\xi_S$ and $\xi_B$ to any radical root $\xi$. The subscripts correspond to "small" and "big", respectively.

\begin{lemma}
Let $\xi = \xi(v_{a, b})$ with $1 < a < A$ and define the radical roots $\xi_S = \xi(v_{a + 1, F(a + 1)})$ and $\xi_B = \xi(v_{a - 1, L(a - 1)})$. They satisfy the following properties:

\begin{itemize}
\item
$\xi_S \preceq \xi^\prime$ for any $\xi^\prime$ such that $\xi^\prime > \xi$ and $\xi^\prime \not \succ \xi$,
\item
$\xi_B \succeq \xi^\prime$ for any $\xi^\prime$ such that $\xi^\prime < \xi$ and $\xi^\prime \not \prec \xi$.
\end{itemize}
\end{lemma}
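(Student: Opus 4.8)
The plan is to exploit the combinatorial description of the posets via the $A \times B$ box together with the three structural properties of $F$ and $L$ (monotonicity of $F$ and $L$, and fullness of each diagonal), since the claim is purely about comparability of radical roots, which by the enumeration lemma reduces to inequalities between the coordinates $(a,b)$ and $(c,d)$ of the vertices.

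First I would set up notation: write $\xi = \xi(v_{a,b})$, $\xi_S = \xi(v_{a+1, F(a+1)})$, $\xi_B = \xi(v_{a-1, L(a-1)})$, which make sense because $1 < a < A$. For the first bullet, take $\xi' = \xi(v_{c,d})$ with $\xi' > \xi$ and $\xi' \not\succ \xi$. By the definition of the total order $<$, the condition $\xi' > \xi$ means $c > a$, or $c = a$ and $d > b$. The condition $\xi' \not\succ \xi$ rules out the case $c \geq a$ and $d \geq b$ simultaneously. Combining: if $c = a$ then $d > b$ forces $d \geq b$, contradicting $\xi' \not\succ \xi$; hence $c > a$, i.e. $c \geq a+1$. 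To conclude $\xi_S \preceq \xi'$ I need $a+1 \leq c$ (already have it) and $F(a+1) \leq d$. Here I use the monotonicity $F(a+1) \leq F(c)$ (valid since $a+1 \leq c$) together with the fact that $v_{c,d}$ lies on diagonal $c$, so $F(c) \leq d$ by fullness of the diagonal. Chaining gives $F(a+1) \leq d$, hence $\xi_S \preceq \xi'$ by the enumeration criterion.

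The second bullet is entirely symmetric: take $\xi' = \xi(v_{c,d})$ with $\xi' < \xi$ and $\xi' \not\prec \xi$. The order condition gives $c < a$, or $c = a$ and $d < b$; the incomparability $\xi' \not\prec \xi$ excludes $c \leq a$ and $d \leq b$ together, so $c = a$ is impossible and we get $c \leq a - 1$. Then $L(c) \leq L(a-1)$ by monotonicity of $L$ (since $c \leq a-1$), and $d \leq L(c)$ because $v_{c,d}$ sits on diagonal $c$; hence $d \leq L(a-1)$, and together with $c \leq a-1$ this gives $\xi(v_{c,d}) \preceq \xi(v_{a-1,L(a-1)}) = \xi_B$, i.e. $\xi_B \succeq \xi'$.

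I expect no serious obstacle here — the only point requiring a little care is correctly disentangling the two conditions "$>$ in the total order" and "$\not\succ$ in the partial order" to extract the strict inequality $c > a$ (resp. $c < a$); once that is done, the monotonicity of $F$ and $L$ and the fullness of diagonals do the rest mechanically. One should also note explicitly that $\xi_S$ and $\xi_B$ are themselves radical roots (vertices of the poset), which is immediate from the construction, and that the hypotheses $1 < a < A$ guarantee the vertices $v_{a-1,\cdot}$ and $v_{a+1,\cdot}$ exist.
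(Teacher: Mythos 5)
Your proof is correct and follows essentially the same route as the paper's: disentangle the total-order and partial-order conditions to conclude $c > a$ (resp.\ $c < a$), then chain the monotonicity of $F$ (resp.\ $L$) with the fullness of the diagonal $c$ to land inside the rectangle defined by $\xi_S$ (resp.\ $\xi_B$). Your write-up is in fact a bit more careful than the paper's — the paper asserts a strict $F(a) < F(c)$ where only the non-strict monotonicity $F(a+1)\le F(c)$ is available or needed, and you correctly use the non-strict version and spell out the chain $F(a+1)\le F(c)\le d$ explicitly.
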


\begin{proof}
Let $\xi^\prime = \xi(v_{c, d})$ be such that $\xi^\prime > \xi$. First consider the case $a = c$ and $b < d$. It follows that $\xi(v_{a, b}) \prec \xi(v_{a, d})$, so that we need not consider this case. Now consider the case $a < c$. Recall that $F(a) < F(c)$ if $a < c$. Then we have $\xi(v_{a + 1, F(a + 1)}) \preceq \xi(v_{c, d})$.

Similarly let $\xi^\prime = \xi(v_{c, d})$ be such that $\xi^\prime < \xi$. First consider the case $a = c$ and $b > d$. It follows that $\xi(v_{a, b}) \succ \xi(v_{a, d})$, so that we need not consider this case. Now consider the case $a > c$. Recall that $L(a) > L(c)$ if $a > c$. Then we have $\xi(v_{a - 1, L(a - 1)}) \succeq \xi(v_{c, d})$.
\end{proof}

The roots $\xi_S$ and $\xi_B$ are depicted in \autoref{fig:strategy-proof}. This figure also illustrates the strategy of the proof of the next proposition, which provides an answer to \autoref{prob:equality}.

\begin{figure}[h]

\begin{center}
\begin{tikzpicture}[
scale = 0.8,
root/.style = {circle, draw=black, fill=blue!5, thick, inner sep=0pt, minimum size=6mm},
redroot/.style = {circle, draw=black, fill=red!15, thick, inner sep=0pt, minimum size=6mm},
greenroot/.style = {circle, draw=black, fill=green!20, thick, inner sep=0pt, minimum size=6mm}
]

\node at (0, 0) [root] (a1) {};
\node at (1, 1) [root] (a2) {};
\node at (2, 2) [redroot] (a3) {$\xi_{j}$};
\node at (3, 3) [root] (a4) {};
\node at (4, 4) [greenroot] (a5) {$\xi_S$};

\node at (2, 0) [greenroot] (a6) {$\xi_B$};
\node at (3, 1) [root] (a7) {};
\node at (4, 2) [root] (a8) {};
\node at (5, 3) [root] (a9) {};

\node at (4, 0) [root] (a10) {};
\node at (5, 1) [redroot] (a11) {$\xi_{i}$};
\node at (6, 2) [root] (a12) {};

\node at (6, 0) [root] (a13) {};
\node at (7, 1) [root] (a14) {};

\node at (8, 0) [root] (a15) {};

\draw (a1) -- (a2) node [red, midway, below, sloped] {};
\draw (a2) -- (a3) node [red, midway, below, sloped] {};
\draw (a3) -- (a4) node [red, midway, below, sloped] {};
\draw (a4) -- (a5) node [red, midway, below, sloped] {};

\draw [red, line width=1.6pt] (a6) -- (a7) node {};
\draw (a7) -- (a8) node {};
\draw (a8) -- (a9) node {};

\draw (a10) -- (a11) node {};
\draw [red, line width=1.6pt] (a11) -- (a12) node {};

\draw (a13) -- (a14) node {};

\draw (a2) -- (a6) node {};
\draw [red, line width=1.6pt] (a3) -- (a7) node {};
\draw (a4) -- (a8) node {};
\draw [red, line width=1.6pt] (a5) -- (a9) node {};

\draw (a7) -- (a10) node {};
\draw (a8) -- (a11) node {};
\draw [red, line width=1.6pt] (a9) -- (a12) node {};

\draw (a11) -- (a13) node {};
\draw (a12) -- (a14) node {};

\draw (a14) -- (a15) node {};

\end{tikzpicture}
\end{center}

\caption{We associate $\xi_S$ to $\xi_i$ and $\xi_B$ to $\xi_j$ (connected by red lines).}
\label{fig:strategy-proof}

\end{figure}
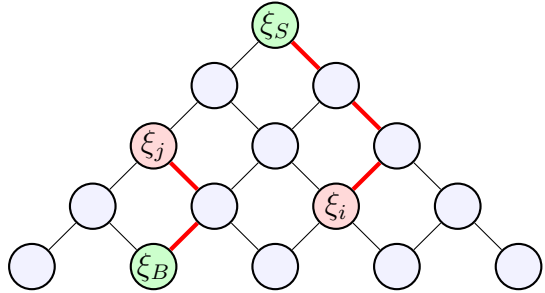

\begin{proposition}
\label{pro:main-proof}
Let $i > j$. Suppose we have the equality
\[
\xi_{i} - \xi_{j} = \sum_{\xi > \xi_{i}} c_{\xi} \xi - \sum_{\xi < \xi_{j}} c_{\xi} \xi + \gamma.
\]
Then necessarily we must have $c_{\xi} = 0$ for all $\xi$.
\end{proposition}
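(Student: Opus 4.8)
The plan is to derive the statement from \autoref{lem:eli-diff}, after disposing of two boundary configurations by a direct argument on the coefficient of $\alpha_t$. Write $\xi_i = \xi(v_{a,b})$ and $\xi_j = \xi(v_{c,d})$; since $i > j$ we have $a > c$, or $a = c$ and $b > d$, so in all cases $a \ge c$. Observe first that every coefficient $c_\xi$ occurring in the identity is a non-negative integer, being one of the exponents $a_k, b_k$ of \eqref{eq:commutator} after relabelling. If $a = A$ then $\mathcal{B}_i = \emptyset$, because any $\xi > \xi_i$ lies on the last diagonal and hence satisfies $\xi \succ \xi_i$; comparing the coefficient of $\alpha_t$ on the two sides of the identity (it is $0$ on the left, $0$ in $\gamma$, and $1$ in each radical root of $\mathcal{S}_j$) gives $\sum_{\xi \in \mathcal{S}_j} c_\xi = 0$, hence $c_\xi = 0$ for all $\xi \in \mathcal{S}_j$; together with \autoref{lem:eli-ord} this proves the claim. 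The case $c = 1$ is symmetric, with $\mathcal{S}_j = \emptyset$. There remains the main case $2 \le c \le a \le A-1$.

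\textbf{Reduction to a coefficient inequality.} Here I would set $\xi_S = \xi(v_{a+1,\,F(a+1)})$ and $\xi_B = \xi(v_{c-1,\,L(c-1)})$. By the lemma just proved these satisfy $\xi_S \preceq \xi$ for all $\xi \in \mathcal{B}_i$ and $\xi_B \succeq \xi$ for all $\xi \in \mathcal{S}_j$, so by \autoref{lem:eli-diff} it suffices to exhibit a simple root whose coefficient in $\xi_i - \xi_j$ is strictly smaller than its coefficient in $\xi_S - \xi_B$. I would take the simple root $a_a$, the label carried by the edges joining diagonal $a$ to diagonal $a+1$, and write $(\,\cdot\,)_{a_a}$ for the coefficient of $a_a$. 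The basic tool is the expansion $\xi(v_{p,q}) = \alpha_t + \sum_{k=1}^{p-1} a_k + \sum_{l=1}^{q-1} d_l$, valid for every vertex of the poset: consecutive diagonals must overlap (else the root poset would be disconnected), so a monotone lattice path from $v_{1,1}$ to $v_{p,q}$ exists, and the sum along it telescopes and is thus path-independent.

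\textbf{The computation and the obstacle.} Using this expansion, $(\xi_S - \xi_i)_{a_a}$ equals $1$ plus the $a_a$-coefficient of $\sum_{l=1}^{F(a+1)-1} d_l - \sum_{l=1}^{b-1} d_l$; since the labels $d_l$ with $F(a+1) \le l \le b-1$ all belong to the line $L_a$ and $a_a \notin L_a$, this correction is non-negative, so $(\xi_S - \xi_i)_{a_a} \ge 1$. Likewise $(\xi_B - \xi_j)_{a_a}$ equals $-[a_{c-1} = a_a]$ plus the $a_a$-coefficient of a signed sum of $d_l$'s with indices between $d$ and $L(c-1)$; any such $d_l$ equal to $a_a$ would, since $d \ge F(c)$, lie in the line $L_{c-1}$, whence \autoref{lem:lines} would force $a < c-1$, contradicting $a \ge c$, so the correction vanishes and $(\xi_B - \xi_j)_{a_a} \le 0$. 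Adding the three differences, $(\xi_S - \xi_B)_{a_a} = (\xi_S - \xi_i)_{a_a} + (\xi_i - \xi_j)_{a_a} - (\xi_B - \xi_j)_{a_a} \ge (\xi_i - \xi_j)_{a_a} + 1$, the required strict inequality, and \autoref{lem:eli-diff} finishes the proof. The hard part is precisely this last step: one must pin down exactly which diagonals can carry the label $a_a$, and it is there that \autoref{lem:lines} (together with the elementary fact $a_i \notin L_i$) is indispensable, which is consistent with the result failing for general reduced decompositions.
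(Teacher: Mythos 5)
Your proof is correct and follows essentially the same route as the paper's: same choice of $\xi_S = \xi(v_{a+1,F(a+1)})$ and $\xi_B = \xi(v_{c-1,L(c-1)})$, same use of \autoref{lem:eli-diff}, same focus on the coefficient of $a_a$, and the same appeal to \autoref{lem:lines}. The only differences are presentational: where the paper inserts the intermediate vertices $\chi_B = \xi(v_{a,L(a)})$ and $\chi_S = \xi(v_{c,F(c)})$ and writes $\xi_S - \chi_B = a_a - \sum_{\alpha\in L_a}\alpha$, $\chi_S - \xi_B = a_{c-1} - \sum_{\alpha\in L_{c-1}}\alpha$, you instead work directly from the expansion $\xi(v_{p,q}) = \alpha_t + \sum_{k<p} a_k + \sum_{l<q} d_l$ (which the paper leaves implicit); and you spell out the $\alpha_t$-coefficient argument in the boundary cases $a = A$, $c = 1$ that the paper compresses into ``one of the two sums is empty''.
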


\begin{proof}
According to our setup, we must have $\xi_{i} = \xi(v_{a, b})$ and $\xi_{j} = \xi(v_{c, d})$
with $a > c$ or $a = c$ and $b > d$.
We can get rid of all $\xi$ such that $\xi \succ \xi_{i}$ and all $\xi$ such that $\xi \prec \xi_{j}$, using \autoref{lem:eli-ord}.
Notice that, in the first case, we can get rid of the roots $\xi(v_{a, k})$ with $k > b$, while, in the second case, we can get rid of the roots $\xi(v_{c, k})$ with $k < d$.
This takes care of the cases where either $\xi_{i}$ is on the last diagonal or $\xi_{j}$
is on the first diagonal. Indeed, in both cases one of the two sums is empty, which in turn implies that no radical roots can appear.

Now we consider the general case.
To the root $\xi_{i} = \xi(v_{a, b})$ we associate $\xi_S = \xi(v_{a + 1, F(a + 1)})$, while to the root $\xi_{j} = \xi(v_{c, d})$ we associate $\xi_B = \xi(v_{c - 1, L(c - 1)})$.
Consider the equality
\[
\xi_S - \xi_B = (\xi_{i} - \xi_{j}) + (\xi_S - \xi_{i}) + (\xi_{j} - \xi_B).
\]
We want to show that the sum $(\xi_S - \xi_{i}) + (\xi_{j} - \xi_B)$ contains at least one simple root with positive coefficient.
If this is the case, then we can apply \autoref{lem:eli-diff} to finish the proof. Indeed, the radical roots $\xi_S$ and $\xi_B$ would satisfy all the assumptions of that lemma.

We can reduce this check to some special cases.
Let us define $\chi _B = \xi(v_{a, L(a)})$ and $\chi_S = \xi(v_{c, F(c)})$.
Then we rewrite the previous equality as follows
\[
\begin{split}
\xi_S - \xi_B & = (\xi_{i} - \xi_{j}) + (\xi_S - \chi_B) + (\chi_S - \xi_B) \\
& + (\chi_B - \xi_i) + (\xi_{j} - \chi_S).
\end{split}
\]
It is immediate to see that $\chi_B \succeq \xi_i$ and $\xi_j \succeq \chi_S$.
Therefore it is enough to check that $(\xi_S - \chi_B) + (\chi_S - \xi_B)$ contains at least one simple root with positive coefficient.

To proceed we write down explicitely the two differences
\[
\begin{split}
\xi(v_{a + 1, F(a + 1)}) - \xi(v_{a, L(a)})
& = a_a - \sum_{k = F(a + 1)}^{L(a) - 1} d_k,\\
\xi(v_{c, F(c)}) - \xi(v_{c - 1, L(c - 1)})
& = a_{c - 1} - \sum_{k = F(c)}^{L(c - 1) - 1} d_k.
\end{split}
\]
We can rewrite these in terms of the sets $L_i$ introduced previously as
\[
\xi_S - \chi_B = a_a - \sum_{\alpha \in L_a} \alpha, \quad
\chi_S - \xi_B = a_{c - 1} - \sum_{\alpha \in L_{c - 1}} \alpha.
\]
Using \autoref{lem:lines} we have that $a_i \in L_j$ implies $i < j$. Since we have $a \geq c$, it follows that $a_a$ does not belong to the sets $L_a$ and $L_{c - 1}$.
Therefore the simple root $a_a$ appears in the sum $(\xi_S - \chi_B) + (\chi_S - \xi_B)$ with positive coefficient. This concludes the proof.

\end{proof}

Therefore, for the particular order that we have chosen in this section, we have a positive answer to \autoref{prob:equality} for all $i > j$. This concludes the proof of \autoref{thm:main-thm}.

\section{Dependence on the reduced decomposition}
\label{sec:counter}

In this section we show that \autoref{thm:main-thm} does not hold for all choices of reduced decomposition. We present a simple counterexample for the cominuscole parabolic corresponding to $C_{3}$. Many other similar counterexamples can be easily exhibited.

\begin{proposition}
\autoref{thm:main-thm} does not hold for all reduced decompositions.
\end{proposition}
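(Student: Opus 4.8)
The plan is to exhibit an explicit reduced decomposition of $w_0$ for which the conclusion of \autoref{thm:main-thm} fails. I would take the cominuscule parabolic of type $C_3$, so $\alpha_t=\alpha_3$, $S=\Pi\backslash\{\alpha_3\}$, and $U_q(\mathfrak{l})$ is the subalgebra of $U_q(\mathfrak{sp}_6)$ generated by the $K_i^{\pm1}$ together with $E_1,F_1,E_2,F_2$. First I would record the six radical roots, the three Levi roots, and the (small) poset of radical roots. The key observation is that a reduced decomposition of the type considered in \autoref{sec:convex-ord} — one whose convex order puts every radical root before every non-radical root — is compatible with the partial order on the radical roots, and, as the rest of the paper shows, cannot be a counterexample; so one must choose a reduced decomposition that does not factor through $w_0(\mathfrak{l})$. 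A convenient way is to start the reduced word with a Levi reflection, say $s_2$: the convexity condition then forces the radical roots $2\alpha_2+\alpha_3$ and $\alpha_1+2\alpha_2+\alpha_3$ (obtained from lower radical roots by adding $\alpha_2$) to the front of the convex order, ahead of radical roots that dominate them. A short search completes such a prefix to a reduced expression for $w_0$ whose convex order is no longer compatible with the partial order on the radical roots.

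For such an order the arguments of \autoref{sec:convex-ord}--\autoref{sec:proof} break down: in particular the Weyl-group element used in the proof of \autoref{lem:eli-ord} no longer makes the relevant expression positive, because $\gamma$ may now contain non-radical roots with negative coefficients. Hence radical quantum root vectors are no longer a priori excluded from the commutator. I would then single out a pair of radical roots $\xi_i,\xi_j$ with $i>j$ and compute $[E_{\xi_i},F_{\xi_j}]$ directly. For this decomposition the quantum root vectors are concrete elements of $U_q(\mathfrak{sp}_6)$, obtained by applying the relevant short product of Lusztig automorphisms to a generator $E_{i_k}$ or $F_{i_k}$; as the rank is small this is a finite, mechanical computation. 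Expanding the commutator — via \cite[Lemma 3.2]{xi} or by normal-ordering directly in $U_q(\mathfrak{sp}_6)$ — one exhibits a PBW monomial $F^AK_\lambda E^B$ with a nonzero exponent on some radical root which survives with nonzero coefficient. Since $U_q(\mathfrak{l})$ is spanned by the PBW monomials supported only on Levi roots (times $K$'s), and PBW monomials are linearly independent, such a term witnesses $[E_{\xi_i},F_{\xi_j}]\notin U_q(\mathfrak{l})$, contradicting \autoref{thm:main-thm} for this reduced decomposition. One then remarks that the same recipe produces many further examples.

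The hard part is exactly this last step. The grading and \autoref{lem:eli-ord} fail to exclude the offending monomials, but that is only a necessary condition: to get a genuine counterexample one must check that the structure constant of at least one such monomial is nonzero. This is invisible at the level of weights and requires either the explicit computation of the commutator in $U_q(\mathfrak{sp}_6)$ or a careful analysis of the coefficients $\sigma(A,\lambda,B)$ of \cite[Lemma 3.2]{xi}; once one nonvanishing forbidden term is isolated, the proposition follows.
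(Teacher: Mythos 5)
Your overall strategy coincides with the paper's: work in type $C_3$ with $\alpha_t=\alpha_3$, pick a reduced decomposition whose convex order does \emph{not} put all radical roots before the non-radical ones, and then argue that a forbidden PBW monomial survives in $[E_{\xi_i},F_{\xi_j}]$. You also correctly flag the real subtlety: the failure of the grading argument (\autoref{prob:equality} and \autoref{lem:eli-ord}) is only a necessary condition, and one still has to check that the coefficient of an offending monomial is actually nonzero.

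However, as written your proposal is a plan rather than a proof, and the plan is missing exactly the concrete data that carry the argument. You never exhibit a reduced decomposition, never say which pair $(\xi_i,\xi_j)$ to take, and never isolate the monomial that survives; you say a ``short search'' would produce the word and that the computation is ``mechanical,'' and you end by acknowledging that ``the hard part is exactly this last step.'' In addition, the heuristic you offer for producing the word --- start with the Levi reflection $s_2$ --- does not by itself do what you claim: having $\alpha_2$ first in the convex order does not force $2\alpha_2+\alpha_3$ or $\alpha_1+2\alpha_2+\alpha_3$ ahead of radical roots that dominate them, and in fact the paper's counterexample word $w_0=s_3\,(s_2s_3s_2)\,(s_1s_2s_3s_2s_1)$ begins with the non-Levi reflection $s_3$. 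What the paper supplies, and your proposal does not, is this explicit decomposition, the resulting convex order $\alpha_3<\alpha_2+\alpha_3<2\alpha_2+\alpha_3<\alpha_2<\alpha_1+2\alpha_2+\alpha_3<\alpha_1+\alpha_2+\alpha_3<2\alpha_1+2\alpha_2+\alpha_3<\alpha_1+\alpha_2<\alpha_1$, and the root-lattice identity $\xi_4-\xi_2=\xi_5-\xi_1$ which exhibits a concrete failure of \autoref{prob:equality} with $i=4$, $j=2$ and forbidden radical roots $\xi_1,\xi_5$ on the right-hand side. It is fair to note that even the paper defers the final nonvanishing check to ``explicit computations,'' so your diagnosis of where the remaining work lies is accurate; but to turn the proposal into a proof you must at least specify the reduced word and the offending identity, and ideally exhibit the surviving PBW term in $[E_{\xi_4},F_{\xi_2}]$.
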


\begin{proof}
Consider the cominuscole parabolic corresponding to $C_{3}$. It is defined by the simple root $\alpha_t = \alpha_3$. Let us choose the reduced decomposition
\[
w_0 = s_3 (s_2 s_3 s_2) (s_1 s_2 s_3 s_2 s_1).
\]
Correspondingly, we have the convex order on the positive roots given by
\begin{gather*}
\alpha_{3} < \alpha_{2} + \alpha_{3} < 2\alpha_{2} + \alpha_{3} < \alpha_{2} < \alpha_{1} + 2\alpha_{2} + \alpha_{3} \\
< \alpha_{1} + \alpha_{2} + \alpha_{3} < 2\alpha_{1} + 2\alpha_{2} + \alpha_{3} < \alpha_{1} + \alpha_{2} < \alpha_{1}.
\end{gather*}
Notice that, for this particular order, we do not have the property $\xi < \alpha$ for all radical roots $\xi$ and non-radical roots $\alpha$.
Focusing on the radical roots, we have the enumeration
\begin{gather*}
\xi_{1} = \alpha_{3}, \
\xi_{2} = \alpha_{2} + \alpha_{3}, \
\xi_{3} = 2\alpha_{2} + \alpha_{3}, \
\xi_{4} = \alpha_{1} + 2\alpha_{2} + \alpha_{3}, \\
\xi_{5} = \alpha_{1} + \alpha_{2} + \alpha_{3}, \
\xi_{6} = 2\alpha_{1} + 2\alpha_{2} + \alpha_{3}.
\end{gather*}
Then we simply need to observe that we have
\[
\xi_{4} - \xi_{2} = \xi_{5} - \xi_{1}.
\]
Therefore \autoref{prob:equality} has a negative answer in this case.
Moreover, it can be checked by explicit computations that \autoref{thm:main-thm} does not hold for this choice.
\end{proof}

The example described above is depicted in \autoref{fig:counter}.

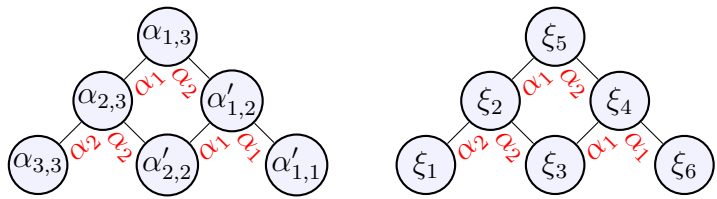
\begin{figure}[h]

\begin{center}
\begin{tikzpicture}[
scale = 0.85,
root/.style = {circle, draw=black, fill=blue!5, thick, inner sep=0pt, minimum size=7.7mm}
]

\node at (0, 0) [root] (a1) {$\alpha_{3,3}$};
\node at (1, 1) [root] (a2) {$\alpha_{2,3}$};
\node at (2, 2) [root] (a3) {$\alpha_{1,3}$};

\node at (2, 0) [root] (a4) {$\alpha^{\prime}_{2,2}$};
\node at (3, 1) [root] (a5) {$\alpha^{\prime}_{1,2}$};

\node at (4, 0) [root] (a6) {$\alpha^{\prime}_{1,1}$};

\draw (a1) -- (a2) node [red, midway, below, sloped] {$\alpha_{2}$};
\draw (a2) -- (a3) node [red, midway, below, sloped] {$\alpha_{1}$};

\draw (a4) -- (a5) node [red, midway, below, sloped] {$\alpha_{1}$};

\draw (a2) -- (a4) node [red, midway, below, sloped] {$\alpha_{2}$};
\draw (a3) -- (a5) node [red, midway, below, sloped] {$\alpha_{2}$};

\draw (a5) -- (a6) node [red, midway, below, sloped] {$\alpha_{1}$};


\node at (6, 0) [root] (b1) {$\xi_{1}$};
\node at (7, 1) [root] (b2) {$\xi_{2}$};
\node at (8, 2) [root] (b3) {$\xi_{5}$};

\node at (8, 0) [root] (b4) {$\xi_{3}$};
\node at (9, 1) [root] (b5) {$\xi_{4}$};

\node at (10, 0) [root] (b6) {$\xi_{6}$};

\draw (b1) -- (b2) node [red, midway, below, sloped] {$\alpha_{2}$};
\draw (b2) -- (b3) node [red, midway, below, sloped] {$\alpha_{1}$};

\draw (b4) -- (b5) node [red, midway, below, sloped] {$\alpha_{1}$};

\draw (b2) -- (b4) node [red, midway, below, sloped] {$\alpha_{2}$};
\draw (b3) -- (b5) node [red, midway, below, sloped] {$\alpha_{2}$};

\draw (b5) -- (b6) node [red, midway, below, sloped] {$\alpha_{1}$};

\end{tikzpicture}
\end{center}

\caption{The counterexample illustrated: to the left the poset of radical roots, to the right its numbering according to the chosen total order.}
\label{fig:counter}

\end{figure}

As mentioned during the proof, for this example the property $\xi < \alpha$ does not hold for all radical roots $\xi$ and non-radical roots $\alpha$. Recall that we assumed this property in the setting of \autoref{sec:convex-ord}. For this reason we make the following conjecture.

\begin{conjecture}
\label{con:conjecture}
\autoref{prob:equality} has a positive answer if $w_0$, the longest word of the Weyl group of $\mathfrak{g}$, is factorized as $w_0 = w^\prime w_{0, \mathfrak{l}}$ or similarly as $w_0 = w_{0, \mathfrak{l}} w^{\prime \prime}$, for some $w^\prime$ and $w^{\prime \prime}$. Here $w_{0, \mathfrak{l}}$ is the longest word of the Weyl group of (the semisimple part of) the Levi factor $\mathfrak{l}$.
\end{conjecture}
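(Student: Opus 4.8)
The plan is to reduce \autoref{con:conjecture} to the case $w_0 = w^{\prime} w_{0, \mathfrak{l}}$ and then attack it along two complementary lines. For a factorization of the form $w_0 = w_{0, \mathfrak{l}} w^{\prime\prime}$, reversing the convex order — equivalently, reading the reduced word backwards, using that the longest element of the Weyl group of $\mathfrak{l}$ squares to the identity — interchanges the two shapes of factorization while turning the condition $\alpha < \xi$ into $\xi < \alpha$, exactly as in the reduction already carried out in \autoref{sec:convex-ord}. So it suffices to treat $w_0 = w^{\prime} w_{0, \mathfrak{l}}$, in which case all radical roots precede all non-radical roots, and by the first lemma of \autoref{sec:convex-ord} the remaining freedom is an arbitrary linear extension $<$ of the partial order $\prec$ on $\Delta(\mathfrak{u}_+)$. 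The question becomes whether \autoref{prob:equality} has a positive answer for every such extension.

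The first line of attack is structural and already settles the main Lie-theoretic content, namely that $[E_{\xi_i}, F_{\xi_j}] \in U_q(\mathfrak{l})$ for every factorized $w_0$. Writing $w_0 = w^{\prime} w_{0, \mathfrak{l}}$ with $\ell(w_0) = \ell(w^{\prime}) + \ell(w_{0, \mathfrak{l}})$, one has $w^{\prime}(\alpha) < 0$ precisely for $\alpha \in \Delta(\mathfrak{u}_+)$, and for cominuscole $\mathfrak{p}$ the element $w^{\prime}$ is fully commutative: its reduced words are in bijection with the linear extensions of the poset of radical roots of \autoref{sec:posets}, which is one of the classical minuscole posets. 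Hence any two reduced words of $w^{\prime}$ are connected by commutation moves $s_a s_b \leftrightarrow s_b s_a$ with $a_{ab} = 0$, and one checks, using $T_a(E_b) = E_b$ and $T_a T_b = T_b T_a$ when $a_{ab} = 0$, that such a move leaves every quantum root vector $E_{\xi}$ (hence $F_{\xi}$) unchanged up to relabeling. Moreover the radical quantum root vectors involve only the $T_i$ coming from $w^{\prime}$, not those from $w_{0, \mathfrak{l}}$. It follows that the family $\{[E_{\xi_i}, F_{\xi_j}]\}$ is independent of the factorized reduced decomposition, so \autoref{thm:main-thm} upgrades to: $[E_{\xi_i}, F_{\xi_j}] \in U_q(\mathfrak{l})$ for every factorized $w_0$.

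For the literal statement of \autoref{con:conjecture}, which concerns the purely combinatorial \autoref{prob:equality} — a sufficient condition that is a priori strictly stronger than the commutator statement, since it ignores possible vanishing of the coefficients $\sigma(A, \lambda, B)$ — I would instead extend the argument of \autoref{sec:proof}. The key observation is that \autoref{lem:eli-ord} holds verbatim for any linear extension, since its proof uses only that $\prec$ refines $<$ together with the existence of suitable Weyl group elements. One is then reduced, as there, to bounding the coefficients $c_\xi$ of radical roots $\xi$ incomparable to $\xi_i$ and placed after it, or incomparable to $\xi_j$ and placed before it; that is, to the sets $\mathcal{B}_i$ and $\mathcal{S}_j$. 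The aim would be to produce radical roots $\xi_S$, $\xi_B$ with $\xi_S \preceq \xi$ for all $\xi \in \mathcal{B}_i$, $\xi_B \succeq \xi$ for all $\xi \in \mathcal{S}_j$, and with $\xi_S - \xi_B$ exceeding $\xi_i - \xi_j$ in some coordinate, so that \autoref{lem:eli-diff} applies. In the uniform box model of \autoref{sec:enumeration} this means splitting $\mathcal{B}_i$ into the incomparable roots on diagonals above that of $\xi_i$ and those on diagonals below, bounding each family separately, and feeding the result into a suitably refined version of \autoref{lem:eli-diff}.

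The main obstacle is precisely this last step. For the lexicographic order of \autoref{sec:proof} the set $\mathcal{B}_i$ lives entirely on higher diagonals, so the single root $\xi_S = \xi(v_{a+1, F(a+1)})$ dominates it from below; for an arbitrary linear extension $\mathcal{B}_i$ may meet diagonals on both sides of $\xi_i$, and there need not be any single radical root $\preceq$ all of $\mathcal{B}_i$ with the required coordinate property, so \autoref{lem:eli-diff} is not directly applicable. Closing this gap appears to require either a genuinely refined elimination lemma handling $\mathcal{B}_i$ and $\mathcal{S}_j$ through a partition into several $\preceq$-bounded families, with a correspondingly strengthened version of \autoref{lem:lines} controlling the coordinate bookkeeping, or else a straightening argument transforming a given linear extension into the lexicographic one by adjacent transpositions of incomparable radical roots while tracking how the root-lattice identity in \autoref{prob:equality} is affected at each step. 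In either approach the irregular posets of $E_6$ and $E_7$, for which no closed formula for $F(\cdot)$ and $L(\cdot)$ is available, would most likely have to be dealt with by a finite case check in the spirit of the proof of \autoref{lem:lines}.
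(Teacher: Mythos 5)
This statement is a \emph{conjecture} in the paper; the author provides no proof, and in fact the surrounding discussion in \autoref{sec:counter} makes clear that it is left open precisely because the combinatorial machinery of \autoref{sec:proof} was only verified for one particular order. So there is no reference argument to compare against, and your proposal has to be assessed on its own merits.

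Your first line of attack is sound and establishes a worthwhile partial result that the paper does not state. For cominuscule $\mathfrak{p}$ the minimal-length coset representative $w'$ with $w_0 = w' w_{0,\mathfrak{l}}$ is fully commutative, its heap is exactly the radical-root poset of \autoref{sec:posets}, and its reduced words are in bijection with linear extensions of that poset. A commutation move $s_a s_b \leftrightarrow s_b s_a$ with $(\alpha_a,\alpha_b)=0$ merely transposes the labels of $\beta_m$ and $\beta_{m+1}$ while leaving each individual root vector unchanged, since $T_a T_b = T_b T_a$ and $T_a(E_b)=E_b$; and the radical quantum root vectors only see the prefix of the reduced word coming from $w'$. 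Hence the unordered family $\{E_\xi, F_\xi : \xi \in \Delta(\mathfrak{u}_+)\}$ is the same for every factorized $w_0$ of that shape, and \autoref{thm:main-thm}, which is proved for one such choice, transfers to all of them. The reversal argument for $w_0 = w_{0,\mathfrak{l}} w''$ is also fine. This genuinely upgrades the theorem from ``there exists a reduced decomposition'' to ``for every factorized reduced decomposition,'' which is the Lie-algebraic heart of the conjecture.

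However, it does not prove \autoref{con:conjecture} as literally stated. The conjecture asserts a positive answer to \autoref{prob:equality}, a purely root-lattice statement that does not mention quantum root vectors at all and is a priori strictly stronger than $[E_{\xi_i},F_{\xi_j}] \in U_q(\mathfrak{l})$: it asks that \emph{every} solution of the identity \eqref{eq:identity} have $c_\xi = 0$, not merely that the coefficients $\sigma(A,\lambda,B)$ vanish whenever some $c_\xi \neq 0$. Your invariance argument is about the root vectors and says nothing about \autoref{prob:equality} for a general linear extension of $\prec$. Your second line of attack, the one that would actually address the combinatorial claim, stalls exactly where you say it does: \autoref{lem:eli-ord} indeed goes through verbatim, but for an arbitrary linear extension $\mathcal{B}_i$ can spread over diagonals on both sides of that of $\xi_i$, so no single pair $\xi_S, \xi_B$ need satisfy the domination hypotheses of \autoref{lem:eli-diff}, and you do not supply a replacement lemma. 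So the proposal contains a correct and useful observation (invariance over factorized $w_0$) together with an honestly acknowledged gap; as it stands it does not prove the conjecture, and the missing step is exactly the one that makes the statement a conjecture rather than a theorem in the paper.
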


\vspace{3mm}

{\footnotesize
\emph{Acknowledgements}.
I am supported by the European Research Council under the European Union's Seventh Framework Programme (FP/2007-2013) / ERC Grant Agreement no. 307663 (P.I.: S. Neshveyev).
}

\bigskip

\end{document}